\newcommand{\Spvek}[2][r]{%
  \gdef\@VORNE{1}
  \left(\hskip-\arraycolsep%
    \begin{array}{#1}\vekSp@lten{#2}\end{array}%
  \hskip-\arraycolsep\right)}
\def\vekSp@lten#1{\xvekSp@lten#1;vekL@stLine;}
\def\vekL@stLine{vekL@stLine}
\def\xvekSp@lten#1;{\def\temp{#1}%
  \ifx\temp\vekL@stLine
  \else
    \ifnum\@VORNE=1\gdef\@VORNE{0}
    \else\@arraycr\fi%
    #1%
    \expandafter\xvekSp@lten
  \fi}
\newtheorem{thm}{Theorem}[section]
\newtheorem{cor}[thm]{Corollary}
\newtheorem{lem}[thm]{Lemma}
\newtheorem{rem}[thm]{Remark}
\theoremstyle{definition}
\newtheorem{defn}{Definition}[section]
\definecolor{wco}{rgb}{0.5,0.2,0.3}
\numberwithin{equation}{section} \theoremstyle{remark}
\title{{\bf    Long-time behaviour for distribution dependent SDEs with local Lipschitz coefficients }
}
\author{
{\bf     Shan-Shan Hu  }\\
\footnotesize{  Center for Applied Mathematics, Tianjin University, Tianjin 300072, China}\\
\footnotesize{  hushanshan0909@tju.edu.cn}}
\begin{document}
\allowdisplaybreaks

\maketitle
\begin{abstract} 
By using a classical truncated argument and introducing the local Wasserstein distance, the global existence and uniqueness are proved for the distribution dependent SDEs with local Lipschitz coefficients. Due to the measure dependence, the conditions in the sense of pointwise for classical cases can be simplified to the conditions in the sense of integral. On the basis of the well-posedness, we prove the $r$-th moment exponential stability using the measure dependent Lyapunov functions and the existence and uniqueness of invariant probability measure is studied under the integrated strong monotonicity condition. Finally, some examples are given to illustrate the results in this paper.

\end{abstract} \noindent
\noindent
 Keywords: Distribution dependent SDEs; Lyapunov functions; Moment exponential stability; Invariant probability measure

\section{Introduction}
Distribution dependent SDEs (DDSDEs for short), also called McKean-Vlasov or mean field SDEs, were first studied by Kac \cite{KAC} in the framework of Boltzman equation. Due to the important application in characterizing nonlinear Fokker-Planck equations, DDSDEs have been intensively investigated, see \cite{DV2000,G2002} and references within for Landau type equations. In \cite{WFY2018}, Wang established the strong well-posedness, exponential contraction as well as Harnack inequality. Moreover, for DDSDEs with singular coefficients, gradient estimates and Harnack type inequalities are derived in \cite{HXWFY}. For DDSDEs with H\"{o}lder continuous diffusion coefficient, Bao and Huang discussed the strong well-posedness  and convergence rate of the continuous time Euler-Maruyama scheme corresponding to stochastic interacting particle systems, see \cite{BH}. Furthermore , Song has investigated the gradient estimates and exponential ergodicity for mean-field SDEs with jumps, see \cite{SYM2019} and references therein. Recently, this type SDEs have been applied in \cite{HS,BLPR,RPPWFY} to characterize PDEs involving  Lions derivative ($L$-derivative for short) introduced by P.-L. Lions in his lecture \cite{LNOTES2013} at college de France.

For distribution independent SDEs, long-time behaviour of the solution for SDEs has attracted a great deal of attention, such as the stability and the invariant probability measure. Using measure independent Lyapunov functions to prove moment exponential stability is a classical procedure, which has been studied. In \cite{MBOOK1} and \cite{MBOOK2}, Mao illustrated a number of results on moment exponential stability and almost sure exponential stability of the solution to SDEs, stochastic functional differential equations (SFDEs) and SDEs with markovian switching. 
Moreover, when the probability distribution of the solution converges weakly to some distribution, we call the equation is stable in distribution and the limit distribution is called as a stationary distribution. For highly nonlinear SFDEs where the terms involved the delay components, \cite{IN ME} established the sufficient criteria on the stability in distribution. It is obvious that stability in distribution implies the existence and uniqueness of invariant probability measure. There are also several research papers which were devoted to study of the invariant measure of DDSDEs under a so-called strong monotonicity condition.  

In this paper, we would investigate the long-time behaviour for DDSDEs, including $r$-th moment exponential stability of the solution as well as the existence of the unique invariant probability measure for nonlinear semigroup $P_t^{*}$ associated with DDSDEs. To introduce our results, we first recall the definition of the solution to DDSDEs. Let $\mathscr{P}(\mathbb{R}^d)$ be the space of probability measures on $\mathbb{R}^d$, and $\mathscr{P}_r(\mathbb{R}^d)$ denotes the collection of probability measures with finite $r$-th moments on $\mathbb{R}^d$, here  $r>0$, that is,
$$\mathscr{P}_r(\mathbb{R}^d)=\left\{\mu \in \mathscr{P}(\mathbb{R}^d):\|\mu\|_r^r=\mu(|\cdot|^r):=\int_{\mathbb{R}^d} |x|^r\mu(\text{d}x)<\infty\right\},$$
which is a polish space under the Wasserstein distance
$$\mathbb{W}_r(\mu,\nu):=\inf\limits_{\pi \in \mathscr{C}(\mu,\nu)}\left(\int_{\mathbb{R}^d\times \mathbb{R}^d} |x-y|^r\pi(\text{d}x,\text{d}y)\right)^{\frac{1}{1\vee r}},\;\;\;\mu,\nu\in\mathscr{P}_r,$$
where $\mathscr{C}(\mu,\nu)$ is the set of all couplings  for $\mu$ and $\nu$. In the remainder, we set $r\geq1$. 

Let $W_t$ be an $d$-dimensional Brownian motion defined on a complete probability space with natural filtration $(\Omega,\mathscr{F},\{\mathscr{F}_t\}_{t\geq 0},\mathbb{P})$. For measurable maps
$$b:[0,\infty)\times \mathbb{R}^d \times \mathscr{P}_r(\mathbb{R}^d)\rightarrow \mathbb{R}^d , \;\;\sigma:[0,\infty)\times \mathbb{R}^d \times \mathscr{P}_r(\mathbb{R}^d)\rightarrow \mathbb{R}^d\otimes \mathbb{R}^d ,$$
we consider the following distribution dependent SDEs :
\begin{eqnarray}
\text{d}X_t=b(t,X_t,\mathscr{L}_{X_t})\text{d}t+\sigma(t,X_t,\mathscr{L}_{X_t})\text{d}W_t,\label{1.1}
\end{eqnarray}
where $\mathscr{L}_{X_t}$ denotes the distribution of a random variable $X_t$ on $\mathbb{R}^d$ under $\mathbb{P}$.
\begin{defn}
(1) For any $s\geq 0$, a continuous adapted process $(X_t)_{t\geq s}$ on $\mathbb{R}^d$ is called a (strong) solution of \eqref{1.1} from time $s$, if
$$\int_{s}^{t}\mathbb{E}\left\{|b_r(X_r,\mathscr{L}_{X_r})|+\|\sigma_r(X_r,\mathscr{L}_{X_r})\|_{HS}^2\right\}\text{d}r< \infty$$
and $\mathbb{P}$-a.s.,
$$X_t=X_s+\int_{s}^{t}b_r(X_r,\mathscr{L}_{X_r})dr+\int_{s}^{t}\sigma_r(X_r,\mathscr{L}_{X_r})dW_r,\;\;\;t\geq s.$$
(2) A couple $(\tilde{X}_t,\tilde{W}_t)_{t\geq s}$ is called a weak solution to equation \eqref{1.1} from time $s$, if $\tilde{W}_t$ is the $d$-dimensional Brownian motion with respect to a complete filtration probability space $(\tilde{\Omega},\{\tilde{\mathscr{F}}_t\}_{t\geq 0},\tilde{\mathbb{P}},\tilde{W}_t)$ and $\tilde{X}_t$ solves the DDSDE
$$\text{d}\tilde{X}_t=b(t,\tilde{X}_t,\mathscr{L}_{\tilde{X}_t|\tilde{\mathbb{P}}})\text{d}t+\sigma(t,\tilde{X}_t,\mathscr{L}_{\tilde{X}_t|\tilde{\mathbb{P}}})\text{d}\tilde{W}_t,\;\;\;t\geq s.$$
\end{defn}

The remainder of the paper is organized as follows. In section 2, we establish the well-posedness of DDSDEs under the local Lipschitz condition, which we not only make a truncation on the state variable, but also on the measure. In section 3, measure dependent Lyapunov functions $V$  
are used to investigate the moment exponential stability of the solution. Tools used in proof are the $L$-derivative and It\^{o}'s formula for DDSDEs. In section 4, we provide an integrated strong monotonicity condition to prove the existence and uniqueness of invariant probability measure for $P_t^{*}$; Finally, in section 5, we cite two examples to illustrate the main results.

We will use the following notations frequently:\\
$\diamond$ Denote by  $\partial_t$ the partial differential in time, $\partial_x$ the gradient operator in $x\in \mathbb{R}^d$, and $\partial_{xx}^2$ the Hessian operator in $x\in \mathbb{R}^d$. That is,
$$\partial_x =(\frac{\partial }{\partial x_1},\ldots,\frac{\partial }{\partial x_d}),\partial_{xx}^2=\left(\frac{\partial^2 }{\partial x_i\partial x_j}\right)_{d\times d}.$$\\
$\diamond$ The Hilbert-Schmidt norm of a matrix $A$ is denoted by $\|A\|_{HS}$, which is defined by $\|A\|_{HS}:=\sqrt{\sum\limits_{i,j}a_{i,j}^2}.$\\
\section{Existence and Uniqueness}
As indicated in \cite{WFY2018, HS, HXWFY, BH}, there is a unique strong (weak) solution to \eqref{1.1} under certain assumptions of coefficients  $b$  and  $\sigma$. In this paper, we make the following assumption $\mathbf{(A)}$:

(A1) For any $N\geq 1$, there exists increasing  $C_N\in C([0,\infty);(0,\infty))$ such that
 \begin{align*}
 |b(t,x,\mu)-b(t,y,\nu)|&+\|\sigma(t,x,\mu)-\sigma(t,y,\nu)\|\leq C_N(t)\{|x-y|+\mathbb{W}_r(\mu,\nu)\},
 \end{align*}
$$t\geq0,\;\;|x|\vee|y|\leq N,\;\;\text{supp} \;\mu, \text{supp}\;\nu\subseteq [-N,N]^d.$$

(A2) There exists  increasing  $K\in C([0,\infty);(0,\infty))$  such that
\begin{align*}
 2\langle b(t,x,\mu),x\rangle &+\|\sigma(t,x,\mu)\|^2\leq K(t)\{1+|x|^2+\|\mu\|_r^2\},
 \end{align*}
$$t\geq0,\;\;x,y\in \mathbb{R}^d,\;\;\;\mu\in\mathscr{P}_{r}.$$

(A3) For any bounded sequence $\{\mu^n\}_{n\geq1} \subset \mathscr{P}_r$ with $\mu^n\rightarrow\mu$ weakly as $n\rightarrow\infty$, we have
\begin{eqnarray*}
\lim\limits_{n\rightarrow\infty}\sup\limits_{|x|\leq N}\sup\limits_{t\in[0,T]}\{|b(t,x,\mu^n)-b(t,x,\mu)|+\|\sigma(t,x,\mu^n)-\sigma(t,x,\mu)\|\}=0.
\end{eqnarray*}

(A4) There exist constants $K,\delta>0$, an increasing map $C_N$ such that for any $t\geq0,|x|\vee|y|\leq N,\mu,\nu\in\mathscr{P}_r$,
\begin{align*}
&|b(t,x,\mu)-b(t,y,\nu)|+\|\sigma(t,x,\mu)-\sigma(t,y,\nu)\|\\
&\leq C_N(t)\{|x-y|+\mathbb{W}_{r,N}(\mu,\nu)+Ke^{-\delta C_N}(\mathbb{W}_{r}(\mu,\nu)\wedge1)\},
\end{align*}
 where $\mathbb{W}_{r,N}^r(\mu,\nu):=\inf\limits_{\pi \in \mathscr{C}(\mu,\nu)}\left(\int_{\mathbb{R}^d\times \mathbb{R}^d} { |\phi_N(x)-\phi_N(y)|}^r\pi(\text{d}x,\text{d}y)\right)$ and $\phi_N(x)=\frac{Nx}{N\vee|x|}$.
 \begin{thm}
Assume (A1)-(A4) for some $r\geq1$. Then the DDSDE \eqref{1.1}
has strong existence and uniqueness for every initial distribution in $\mathscr{P}_r$. Moreover, for any $p\geq r$ and  $T>0$, $\mathbb{E}|X_0|^p<\infty$ implies
$$\mathbb{E}\sup\limits_{t\in[0,T]}|X_{t}|^p<\infty.$$
\end{thm}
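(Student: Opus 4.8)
The plan is to adapt the truncation method of \cite{WFY2018} to the measure setting, truncating \emph{both} the state and the law through the radial contraction $\phi_N(x)=\tfrac{Nx}{N\vee|x|}$. For each $N\ge1$ I set $b^N(t,x,\mu):=b\big(t,\phi_N(x),\mu\circ\phi_N^{-1}\big)$ and $\sigma^N(t,x,\mu):=\sigma\big(t,\phi_N(x),\mu\circ\phi_N^{-1}\big)$. As $\phi_N$ is $1$-Lipschitz with range in the centred ball of radius $N$, assumption (A4) makes $b^N,\sigma^N$ globally Lipschitz in $(x,\mu)$ with respect to $\big(|\cdot|,\mathbb{W}_r\big)$: the local distance $\mathbb{W}_{r,N}$ absorbs the genuinely $N$-dependent part, while the residual $C_N(t)K\mathrm{e}^{-\delta C_N}\big(\mathbb{W}_r\wedge1\big)$ contributes only a bounded multiple of $\mathbb{W}_r$. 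For such coefficients the truncated DDSDE is well posed by a standard fixed-point argument: freezing a marginal flow $(\mu_t)\in C([0,T];\mathscr{P}_r)$ turns \eqref{1.1} into a classical SDE with Lipschitz-in-state coefficients, and the resulting map $(\mu_t)\mapsto(\mathscr{L}_{Y_t^\mu})$ is a contraction on $C([0,T];\mathscr{P}_r)$ for the weighted metric $\sup_{t\le T}\mathrm{e}^{-\lambda t}\mathbb{W}_r$ with $\lambda$ large. This yields a unique strong solution $X^N$ of the $N$-truncated equation on each $[0,T]$.

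Next I would prove moment bounds uniform in $N$. Let $\tau_N:=\inf\{t\ge0:|X_t^N|\ge N\}$; on $[0,\tau_N]$ the contraction $\phi_N$ acts trivially on the state, so the coefficients of $X^N$ coincide there with the original $b,\sigma$ and the coercivity (A2) applies verbatim. Applying It\^o's formula to $|X_{t\wedge\tau_N}^N|^p$, the drift together with the quadratic-variation term is, by (A2) (and the standard It\^o--Young manipulations for $p>2$), bounded by $K_p(t)\{1+|X_t^N|^p+\|\mathscr{L}_{X_t^N}\circ\phi_N^{-1}\|_r^p\}$; since $p\ge r$, Jensen gives $\|\mathscr{L}_{X_t^N}\circ\phi_N^{-1}\|_r^p\le\mathbb{E}|X_t^N|^p$, so the measure term is dominated by the very moment being estimated. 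Together with the Burkholder--Davis--Gundy inequality for the martingale part, Gronwall's lemma gives $\mathbb{E}\sup_{t\le T}|X_{t\wedge\tau_N}^N|^p\le C_{p,T}\big(1+\mathbb{E}|X_0|^p\big)$ with $C_{p,T}$ independent of $N$. Chebyshev then yields $\mathbb{P}(\tau_N\le T)\le N^{-p}C_{p,T}(1+\mathbb{E}|X_0|^p)\to0$, i.e. non-explosion, and the same bound will produce the asserted estimate $\mathbb{E}\sup_{t\le T}|X_t|^p<\infty$ for the limit by lower semicontinuity.

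Finally I would remove the truncation, and this is where the main obstacle lies. In the classical (measure-free) theory the solutions of successive truncated equations coincide before exiting the ball, because there $\phi_N$ is inactive on the relevant paths; here this fails, since the coefficients depend on the whole law $\mathscr{L}_{X_t}$, which carries mass at every scale and therefore cannot be localised even when a given path stays in the ball. To quantify the discrepancy I would estimate, for $M>N$, the quantity $\mathbb{E}\sup_{t\le T\wedge\rho_N}|X_t^N-X_t^M|^r$ up to the stopping time $\rho_N:=\inf\{t:|X_t^N|\vee|X_t^M|\ge N\}$. By (A4) the state and local-Wasserstein terms generate the usual Lipschitz feedback, closed by Gronwall, while the uncontrollable global coupling through $\mathscr{L}_{X_t}$ enters only through $C_N(t)K\mathrm{e}^{-\delta C_N}(\mathbb{W}_r\wedge1)$. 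The decisive analytic point is that the exponential weight must beat the possibly divergent local Lipschitz constant, so that $C_N\mathrm{e}^{-\delta C_N}$ stays bounded — indeed tends to $0$ — rendering the global measure term a genuinely small perturbation; combined with the uniform tail bound $\mathbb{W}_r(\mathscr{L}_{X_t^N},\mathscr{L}_{X_t^N}\circ\phi_N^{-1})\to0$ from the moment estimate, this makes $\{X^N\}$ Cauchy. Identifying the limit $X$ as a solution of \eqref{1.1} uses the weak-continuity (A3) to pass $b^N(t,\cdot,\mathscr{L}_{X_t^N})\to b(t,\cdot,\mathscr{L}_{X_t})$ under the integral sign, and pathwise uniqueness follows from the identical Gronwall estimate applied to two solutions sharing a common initial law.
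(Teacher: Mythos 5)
Your first two paragraphs match the paper's strategy: the same truncation $b^n(t,x,\mu)=b(t,\phi_n(x),\mu\circ\phi_n^{-1})$, well-posedness of each truncated equation via the frozen-marginal fixed point of \cite{WFY2018}, uniform moment bounds from (A2), and Chebyshev for the exit times. The divergence --- and the gap --- is in your de-truncation step. The paper does \emph{not} attempt to make $\{X^N\}$ Cauchy by a direct (A4)-based Gronwall estimate. Instead it proves tightness of the laws $\mu^n=\mathscr{L}_{X^n}$ in $\mathscr{P}(C([0,T]))$ (BDG modulus-of-continuity bounds plus Arzel\'a--Ascoli/Prokhorov), extracts a weakly convergent subsequence with limit $\mu$, and then uses the weak-continuity hypothesis (A3) to compare \emph{both} measure arguments to the common limit $\mu_t$; the measure discrepancy then enters only as an additive error $\epsilon_{k,l}\to0$, uniform over $|x|\le N$ and $t\in[0,T]$, and the Gronwall loop involves only the stopped state difference. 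Assumption (A4) is reserved for uniqueness. Your proposal relegates (A3) to the final identification step and tries to get the Cauchy property from (A4) alone; that route breaks down at two concrete points.

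First, stopping at $\rho_N$ localizes the paths but not the laws: the coefficients of $X^N$ and $X^M$ are evaluated at the \emph{unstopped} laws $\mathscr{L}_{X^N_t}$, $\mathscr{L}_{X^M_t}$, so the feedback term is $\mathbb{W}_{r,N}(\mathscr{L}_{X^N_t},\mathscr{L}_{X^M_t})$, which is not controlled by $\mathbb{E}\,|X^N_{t\wedge\rho_N}-X^M_{t\wedge\rho_N}|^r$. Converting one into the other costs an error of order $N^r\,\mathbb{P}(\rho_N\le T)$, and under the theorem's hypothesis $\mathscr{L}_{X_0}\in\mathscr{P}_r$ Chebyshev only gives $\mathbb{P}(\rho_N\le T)=O(N^{-r})$, so this error is bounded but \emph{not} small. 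Making it vanish --- and likewise justifying your claim that $\mathbb{W}_r(\mathscr{L}_{X^N_t},\mathscr{L}_{X^N_t}\circ\phi_N^{-1})\to0$ --- requires uniform integrability of $\{\sup_{t\le T}|X^N_t|^r\}_{N\ge1}$, i.e.\ essentially moments of order strictly larger than $r$; this is exactly the extra hypothesis ($\theta>r$) that the paper's Corollary 2.2 must impose when it runs a $\mathbb{W}_r$-quantitative version of the argument, and it is precisely what the compactness-plus-(A3) mechanism of Theorem 2.1 is designed to avoid. Second, the smallness of $C_N e^{-\delta C_N}$ does not survive Gronwall: the Gronwall factor is $e^{C_N T}$, so the perturbation contributes $C_N K T e^{-C_N(\delta-T)}$, which tends to $0$ as $N\to\infty$ only when $T<\delta$ and diverges when $T>\delta$ (if $C_N\to\infty$). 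The paper meets this obstruction only in the uniqueness proof, where it explicitly restricts to $t\le\delta\wedge T$ and then iterates over successive time intervals; your existence argument would need the same interval-by-interval iteration, stated and justified, and even then it remains blocked by the first issue. As written, the de-truncation step of your proof does not go through; you need either the paper's tightness/(A3) argument or an added uniform-integrability (higher-moment) assumption.
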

\begin{proof}
Similar with the classical case, this theorem is proved by a truncation procedure. We construct a series of equations
\begin{eqnarray}
X^{n}(t)=X(0)+\int_{0}^{t}b^n(s,X^{n}_s,\mathscr{L}_{X^{n}_s})\text{d}s+\int_{0}^{t}\sigma^{n}(s,X^{n}_s,\mathscr{L}_{X^{n}_s})\text{d}W_s,\;t\in[0,T].\label{1}
\end{eqnarray}
where $b^{n}(x,\mu)=b(\phi_n(x),\mu\circ \phi^{-1}_n)$,\;\;\;$\sigma^{n}(x,\mu)=\sigma(\phi_n(x),\mu\circ \phi^{-1}_n)$.

As explained in \cite [Proof of Theorem 2.1 (1)] {WFY2018}, we see that for each $n\geq1$, the equation \eqref{1} has a unique solution.
For any $n\geq 1$, setting $\tau^{n}=\inf\{t\geq0:|X^{n}(t)|\geq n\}$ and  applying It\^{o}'s formula to $|X^{n}(t)|^r$ up to time $T\wedge\tau^{n}$, we obtain from (A2) that
$$\mathbb{E}|X_{t\wedge\tau^{n}}^n|^r\leq \mathbb{E}|X_0|^re^{2KrT}=:\delta,\;\;\;n\geq1,t\in [0,T].$$
Consequently, the stopping times
$$\tau^{n}_{N}:=\inf\{t\geq0:|X^{n}(t)|\geq N\},\;\;\;n\geq N\geq 1,$$
satisty
\begin{eqnarray}
\mathbb{P}(\tau^{n}_N<T)\leq \frac{\delta}{N^r}.\label{probability}
\end{eqnarray}
Next, by $(A_1)$  and BDG's inequality, for any $l\geq1$, there exists a constant $C_{n}>0$ such that
\begin{align*}
&\mathbb{E}\sup\limits_{t\in[s,(s+\epsilon)\wedge T]}|X^n(t)-X^n(s)|^{2l}\nonumber\\
&\leq 2^{2l-1}C_n\epsilon^l,\;\;s\in[0,T-\epsilon].
\end{align*}
For any $0<\epsilon<T$, let $k\in\mathbb{N}$ such that $k\epsilon\in[T,T+\epsilon)$. We obtain
\begin{align*}
&\mathbb{E}\sup\limits_{|t-s|\leq \epsilon}|X^n(t)-X^n(s)|^{2l}\nonumber\\
&\leq \sum_{i=1}^{k} \mathbb{E}\sup\limits_{t\in[(i-1)\epsilon,(i\epsilon)\wedge T]}|X^n(t)-X^n((i-1)\epsilon)|^{2l}\nonumber\\
&\leq C_{n,l}(T+\epsilon)\epsilon^{l-1},\;\;\;\;\;\;n\geq 1.
\end{align*}
In particular, taking $l=2$, by Jensen's inequality, we derive
\begin{eqnarray*}
\mathbb{E}\sup\limits_{|t-s|\leq \epsilon}|X^n(t)-X^n(s)|\leq {C_n(T+\epsilon)}^{\frac{1}{4}}\epsilon^{\frac{1}{4}},\;\;\;\;\;\;n\geq 1.
\end{eqnarray*}
We deduce from Arzel\'{a}-Ascoli theorem that $\{\mu^{n}:=\mathscr{L}_{X^{n}}\}_{n\geq1}$ is tight in $\mathscr{P}(C([0,T]))$. By the Prokhorov theorem, there exists a  subsequence $\{n_k\}_{k\geq1}$ and $\mu$ such that $\mu^{n_k}\rightarrow\mu$ weakly in $\mathscr{P}(C[0,T])$ as $k\rightarrow\infty$. Besides, for marginal measures at $t\in[0,T]$, we have $\mu^{n_k}_t\rightarrow\mu_t$  weakly on $\mathbb{R}^{d}$ as $k\rightarrow\infty$.

Let $\tau^{k,l}_N=\tau^{n_k}_N\wedge\tau^{n_l}_N$. From (A1) and (A3), we find a constant $C_N>0$  and a family of constants $\{\epsilon_{k,l}:k,l\geq1\}$ with $\epsilon_{k,l}\rightarrow 0$ as $k,l\rightarrow\infty$ such that
\begin{align*}
&|b^{n_k}(X^{n_k}_{t\wedge\tau^{k,l}_N},\mathscr{L}_{X_t^{n_k}})-b^{n_l}(X^{n_l}_{t\wedge\tau^{k,l}_N},\mathscr{L}_{X_t^{n_l}})|\nonumber\\
\leq& |b(X^{n_k}_{t\wedge\tau^{k,l}_N},\mathscr{L}_{X^{n_k}}\circ\phi_{n_k}^{-1})-b(X^{n_k}_{t\wedge\tau^{k,l}_N},\mu_t)|
+|b(X^{n_k}_{t\wedge\tau^{k,l}_N},\mu_t)-b(X^{n_k}_{t\wedge\tau^{k,l}_N},\mathscr{L}_{X^{n_l}}\circ\phi_{n_l}^{-1})|\nonumber\\
&+|b(X^{n_k}_{t\wedge\tau^{k,l}_N},\mathscr{L}_{X^{n_l}}\circ\phi_{n_l}^{-1})-b(X^{n_l}_{t\wedge\tau^{k,l}_N},\mathscr{L}_{X^{n_l}}\circ\phi_{n_l}^{-1})|\nonumber\\
\leq&C_N|X^{n_k}_{t\wedge\tau^{k,l}_N}-X^{n_l}_{t\wedge\tau^{k,l}_N}|+C_N\epsilon_{k,l}.
\end{align*}
where $\mu_t$ is the limit of weak convergence of the subsequence $\mu^{n_k}$ at time $t$.
Similarly, we also have
\begin{align*}
&\|\sigma^{n_k}(X^{n_k}_{t\wedge\tau^{k,l}_N},\mathscr{L}_{X_t^{n_k}})-\sigma^{n_l}(X^{n_l}_{t\wedge\tau^{k,l}_N},\mathscr{L}_{X_t^{n_l}})\|\nonumber\\
\leq& C_N|X^{n_k}_{t\wedge\tau^{k,l}_N}-X^{n_l}_{t\wedge\tau^{k,l}_N}|+C_N\epsilon_{k,l}.
\end{align*}
By (A1), it follows from BDG's inequality that
\begin{align*}
&\mathbb{E}\sup\limits_{t\in[0,T]}|X^{n_k}_{t\wedge\tau^{k,l}_N}-X^{n_l}_{t\wedge\tau^{k,l}_N}|^2\nonumber\\
&\leq C_N\int_{0}^{T}\mathbb{E}\sup\limits_{0\leq r\leq t}|X^{n_k}_{r\wedge\tau^{k,l}_N}-X^{n_l}_{r\wedge\tau^{k,l}_N}|^2\d t+C_N\epsilon_{k,l}T,\;\;\;k,l\geq N\geq 1,
\end{align*}
and from  Gr\"{o}nwall's inequality that
\begin{eqnarray}
\lim\limits_{k,l\rightarrow\infty}\mathbb{E}\sup\limits_{t\in[0,T]}|X^{n_k}_{t\wedge\tau^{k,l}_N}-X^{n_l}_{t\wedge\tau^{k,l}_N}|^2\leq\lim\limits_{k,l\rightarrow\infty}C_{N}\epsilon_{k,l}Te^{C_N T}=0.\label{expectation}
\end{eqnarray}
Then we infer from \eqref{probability} and \eqref{expectation}  that for any $\epsilon>0$,
\begin{align*}
&\mathbb{P}(\sup\limits_{0\leq t\leq T}|X_t^{n_k}-X_t^{n_l}|\geq\epsilon)\nonumber\\
&\leq \frac{2\delta}{N^r}+\mathbb{P}\left(\sup\limits_{0\leq t\leq T}|X_{t\wedge\tau^{k,l}_N}^{n_k}-X_{t\wedge\tau^{k,l}_N}^{n_l}|\geq\epsilon\right)\\
&\leq \frac{2\delta}{N^r}+\frac{\mathbb{E}\left(\sup\limits_{0\leq t\leq T}|X_{t\wedge\tau^{k,l}_N}^{n_k}-X_{t\wedge\tau^{k,l}_N}^{n_l}|^2\right)}{\epsilon^2}.
\end{align*}

Letting $N\rightarrow\infty$, 
we can find a subsequence $\{\tilde{n}_k\}$ (indexed by $n$ for notational simplicity) and an adapted continuous process $(X_t)_{0\leq t\leq T}$ such that
\begin{eqnarray}
\lim\limits_{n\rightarrow\infty}\sup\limits_{0\leq t\leq T}|X_t^{n}-X_t|=0,\;\;\;\mathbb{P}\text{-a.s.}.\label{0.11}
\end{eqnarray}
Therefore we can pass to the limit in the following equation as $n\rightarrow\infty$,
\begin{eqnarray*}
X^{n}(t)=X(0)+\int_{0}^{t}b^n(X^{n}_s,\mathscr{L}_{X^{n}_s})\text{d}s+\int_{0}^{t}\sigma^{n}(X^{n}_s,\mathscr{L}_{X^{n}_s})\text{d}W_s,\label{approximation}
\end{eqnarray*}
and conclude that $X_t$ solves \eqref{1.1}.

At last, we show that $X_t$ is the unique solution of \eqref{1.1} by condition $(A_4)$. Let $X_t$ and $Y_t$ be two solutions with $X_0=Y_0$. Let
$$\tau_n=\tau_n^{X}\wedge\tau_n^{Y}=\inf\{t\geq0:|X_t|\vee |Y_t|\geq n\},\;\;\;n\geq 1.$$
By It\^{o}'s formula,
\begin{align*}
&\mathbb{E}|X(t\wedge\tau_n)-Y(t\wedge\tau_n)|^2\nonumber\\
\leq& C_n\int_{0}^{t}\mathbb{E}|X(s\wedge\tau_n)-Y(s\wedge\tau_n)|^2\text{d} s+C_n\int_{0}^{t}\mathbb{W}_{r,n}^2(\mathscr{L}_{X_s},\mathscr{L}_{Y_s})\text{d}s+C_n Ke^{-\delta C_n}t\\
\leq &C_n\int_{0}^{t}\mathbb{E}|X(s\wedge\tau_n)-Y(s\wedge\tau_n)|^2\text{d} s +C_n\int_{0}^{t}(\mathbb{E}|X(s\wedge\tau_n)-Y(s\wedge\tau_n)|^r)^{\frac{2}{r}}\text{d} s +C_n Ke^{-\delta C_n}t.
\end{align*}
Applying  Gr\"{o}nwall's inequality and Fatou's lemma, we derive for any $r\in[1,2]$ and $t\in [0,\delta\wedge T]$,
$$\mathbb{E}|X_t-Y_t|^2\leq \liminf\limits_{n\rightarrow\infty}\mathbb{E}|X(t\wedge\tau_n)-Y(t\wedge\tau_n)|^2\leq C_nKte^{-C_n(\delta-t)}=0.\;$$
This implies the pathwise uniqueness up to time $t_0:=\delta\wedge T$. Similarly, we can prove the uniqueness for any $r\in[2,\infty)$ up to time $t_1:=\frac{r\delta}{2}\wedge T$.
\end{proof}
The assumptions (A1)-(A4) can be replaced by the following assumptions (B1)-(B3), and we can make the same well-posedness conclusion.

(B1)For any $N\geq 1$, there exists a constant $C_N>0$ such that
 \begin{align*}
 |b(x,\mu)-b(y,\nu)|&+\|\sigma(x,\mu)-\sigma(y,\nu)\|\leq C_N\{|x-y|+\mathbb{W}_r(\mu,\nu)\},\\
 &|x|\vee|y|\leq N,\;\;\text{supp} \;\mu,\nu\in \mathscr{P}_r.
 \end{align*}

(B2) There exists a constant $K$ such that for any $x\in\mathbb{R}^d,\mu\in\mathscr{P}_r$
\begin{align*}
 &2\langle b(x,\mu),x\rangle\leq K\{1+|x|^2+\|\mu\|_r\},\\
 &\|\sigma(x,\mu)\|^2\leq K\{1+|x|^2+\|\mu\|_r^2\}.
 \end{align*}

(B3) There exist constants $K,\delta>0$, an increasing map $C_\cdot:\mathbb{N}\rightarrow(0,\infty)$ such that for any $x,y\in \mathbb{R}^d$ with $|x|\vee|y|\leq N$, $\mu,\nu\in\mathscr{P}_r$,
\begin{align*}
 &|b(x,\mu)-b(y,\nu)|+\|\sigma(x,\mu)-\sigma(y,\nu)\|\\
 &\leq C_N\{|x-y|+\mathbb{W}_{r,N}(\mu,\nu)+Ke^{-\delta C_N}(\mathbb{W}_{r}(\mu,\nu)\wedge1)\}.
 \end{align*}
\begin{cor}
 Assume (B1)-(B3) holds. Then $\mathscr{L}_{X_0}\in\mathscr{P}_{\theta}(\theta>r)$  implies \eqref{1} has a unique solution $X_t$ with
 $$\mathbb{E}\sup\limits_{t\in[0,T]}|X_{t}|^\theta<\infty,\;\;T>0.$$
\end{cor}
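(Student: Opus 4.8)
The plan is to re-run the truncation and compactness argument of Theorem 2.1, with two modifications forced by the fact that (B1)--(B3) are the time-homogeneous analogues of (A1), (A2), (A4) but contain no direct counterpart of the weak-continuity hypothesis (A3). First I would record the routine implications: (B1) gives (A1) with the constant choice $C_N(t)\equiv C_N$; summing the two estimates in (B2) and using $\|\mu\|_r\le 1+\|\mu\|_r^2$ yields (A2) with $K(t)\equiv K'$; and (B3) is exactly (A4) with constant $C_N$. Hence the truncated equations \eqref{1} with $b^n(x,\mu)=b(\phi_n(x),\mu\circ\phi_n^{-1})$ and $\sigma^n(x,\mu)=\sigma(\phi_n(x),\mu\circ\phi_n^{-1})$ are each well-posed, and the uniqueness part of the proof transfers verbatim, using (B3) in place of (A4) in the It\^o estimate with the local Wasserstein distance $\mathbb{W}_{r,n}$, followed by Gr\"onwall and Fatou.

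The first real step is a \emph{uniform-in-$n$} $\theta$-th moment bound. Applying It\^o's formula to $|X^n_t|^\theta$ up to $T\wedge\tau^n$ and bounding the Hessian term, the drift is controlled by $\theta|X^n_t|^{\theta-2}\langle b^n,X^n_t\rangle+\tfrac{\theta}{2}|X^n_t|^{\theta-2}\|\sigma^n\|^2+\tfrac{\theta(\theta-2)}{2}|X^n_t|^{\theta-2}\|\sigma^n\|^2$; the two inequalities in (B2) then give a bound of the form $C(1+|X^n_t|^\theta+|X^n_t|^{\theta-2}\|\mu^n_t\|_r^2)$. Young's inequality turns the cross term into $C(|X^n_t|^\theta+\|\mu^n_t\|_r^\theta)$, and here the hypothesis $\theta>r$ is decisive: by Jensen, $\|\mu^n_t\|_r^\theta=(\mathbb{E}|X^n_t|^r)^{\theta/r}\le\mathbb{E}|X^n_t|^\theta$. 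Taking expectations and applying Gr\"onwall yields $\sup_n\sup_{t\le T}\mathbb{E}|X^n_t|^\theta<\infty$, and a BDG estimate upgrades this to $\sup_n\mathbb{E}\sup_{t\le T}|X^n_t|^\theta<\infty$; Fatou will then deliver the asserted bound $\mathbb{E}\sup_{t\le T}|X_t|^\theta<\infty$ for the limit.

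The second step is to use this extra integrability to replace (A3). As in Theorem 2.1, tightness of $\{\mathscr{L}_{X^n}\}$ follows from the moment bound together with the Kolmogorov-type increment estimate, so a subsequence converges weakly to some $\mu$ with marginals $\mu^{n_k}_t\to\mu_t$ weakly. Now the uniform $\theta$-moment bound makes $\{|\cdot|^r\}$ uniformly integrable along $\mu^{n_k}_t$, whence weak convergence is promoted to $\mathbb{W}_r(\mu^{n_k}_t,\mu_t)\to0$. Therefore (B1) gives, for $|x|\le N$, $|b(x,\mu^{n_k}_t)-b(x,\mu_t)|+\|\sigma(x,\mu^{n_k}_t)-\sigma(x,\mu_t)\|\le C_N\mathbb{W}_r(\mu^{n_k}_t,\mu_t)=:C_N\epsilon_k\to0$, which is precisely the estimate that (A3) supplied in the theorem. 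With this in hand, the Cauchy-in-probability argument, the Gr\"onwall step \eqref{expectation} and the extraction of an a.s.\ convergent subsequence \eqref{0.11} proceed as before, and passing to the limit identifies the limit as a solution of \eqref{1.1}.

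The main obstacle is the interplay between the moment closure and the removal of (A3): one must simultaneously keep the $\theta$-th moments bounded uniformly in $n$ \emph{and} ensure that this bound is strong enough to upgrade weak convergence of the marginals to Wasserstein convergence. Both hinge on the strict inequality $\theta>r$ --- the former through the Jensen step that closes the Gr\"onwall estimate, the latter through uniform integrability of the $r$-th moments --- so the delicate point is verifying that the constants produced by (B2) and Young's inequality are genuinely independent of $n$ and that the exponent bookkeeping is consistent for every $\theta>r$.
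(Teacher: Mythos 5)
Your proposal is correct and takes essentially the same route as the paper: a uniform-in-$n$ bound on $\mathbb{E}\sup_{t\le T}|X^n_t|^\theta$ obtained from It\^{o}'s formula and (B2), which gives uniform integrability of the $r$-th moments, upgrades the weak convergence of the marginal laws to $\mathbb{W}_r$-convergence, and thereby lets (B1) play the role of (A3) in the Cauchy-in-probability and limit-passage argument of Theorem 2.1. The only cosmetic difference is that the paper applies It\^{o}'s formula to $(1+|X^n_t|^2)^{\theta/2}$ rather than to $|X^n_t|^\theta$, which avoids the lack of smoothness of $|x|^\theta$ at the origin when $\theta<2$.
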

\begin{proof}
We make the same truncation procedure and construct the same approximation equation as \eqref{1}. It suffices to prove the uniform integrability of $\{|X_{\cdot}|^r\}_{n\geq1}$. It is easy to see that if $\sup\limits_{n\geq1}\mathbb{E}[\sup\limits_{0\leq t\leq T}|X^n(t)|^{\theta}]<\infty$, $(\theta>r)$, then $\{|X_{\cdot}|^r\}_{n\geq1}$ is uniform integrabel. It follows from It\^{o}'s formula and (B2) that
\begin{align*}
\text{d} (1+|X^n(t)|^2)^{\frac{\theta}{2}}\leq &C_{k,\theta}(1+|X^n(t)|^2)^{\frac{\theta}{2}-1}[1+|X^n(t)|^2+(1+|X^n(t)|)\|\phi_n(X^n(t))\|_r]\text{d}t\\
&+C_{k,\theta}(1+|X^n(t)|^2)^{\frac{\theta}{2}-1}[1+|X^n(t)|^2+\|\phi_n(X^n(t))\|_r^2]\text{d}t\\
&+C_{\theta}(1+|X^n(t)|^2)^{\frac{\theta}{2}-1}\langle X^n(t),\sigma(\phi_n(X^n(t)),\mathscr{L}_X^n(t)\circ \phi_n^{-1}) \text{d} W_t\rangle,\\
\end{align*}
furthermore, we have
\begin{align*}
\mathbb{E}\sup\limits_{0\leq t\leq T}(1+|X^n(t)|^2)^{\frac{\theta}{2}}\leq &\mathbb{E}(1+|X(0)|^2)^{\frac{\theta}{2}}+C_{k,\theta}\int_{0}^{T}\mathbb{E}(1+|X^n(t)|^2)^{\frac{\theta}{2}}\text{d}t\\
&+C_{k,\theta}\int_{0}^{T}\mathbb{E}(1+|X^n(t)|^2)^{\frac{\theta-1}{2}}(\mathbb{E}(|X^n(t)|^\theta)^{\frac{1}{\theta}}\text{d}t\\
&+C_{k,\theta}\int_{0}^{T}\mathbb{E}(1+|X^n(t)|^2)^{\frac{\theta}{2}-1}(\mathbb{E}(|X^n(t)|^\theta)^{\frac{2}{\theta}}\text{d}t\\
&+C_{\theta}\mathbb{E}\left(\int_{0}^{T}(1+|X^n(t)|^2)^{\theta-1}\|\sigma(\phi_n(X^n(t)),\mathscr{L}_X^n(t)\circ \phi_n^{-1})\|\text{d}t\right)^{\frac{1}{2}}\\
\leq&\mathbb{E}(1+|X(0)|^2)^{\frac{\theta}{2}}+C_{k,\theta}\int_{0}^{T}\mathbb{E}\sup\limits_{0\leq s\leq t}(1+|X^n(s)|^2)^{\frac{\theta}{2}}\text{d}t\\
\leq&\mathbb{E}(1+|X(0)|^2)e^{C_{k,\theta}T}<\infty.
\end{align*}
Thus, $\mu^{n_k}\rightarrow\mu$  in $\mathscr{P}_r(C[0,T])$ as $k\rightarrow\infty$. Besides, for marginal measures at $t\in[0,T]$, we have $\mu^{n_k}_t\rightarrow\mu_t$  weakly in $\mathscr{P}_r(\mathbb{R}^{d})$ as $k\rightarrow\infty$. By (B1) and the similar steps, we derive that $(X_{\cdot}^{n_k})_{n_k\geq1}$ is convergent mutually in probability on $C([0,T])$ and the wellposedness is concluded.
\end{proof}
\section{Moment exponential stability}
Now, we first recall the definition of moment exponential stability, which is standard in literature. Because we will adopt measure dependent Lyapunov functions $V$ and It\^{o}'s formula for DDSDEs, we the definition of Lions derivative for functions on $\mathscr{P}_r(\mathbb{R}^d)$. Here we make use of a straightforward definition introduced in \cite{RPPWFY} to explain the derivative with respect to measure. 
We also introduce briefly original Lions derivative in Appendix. Besides, the following definition of $L$-derivative coincides with the Wasserstein derivative introduced by P.-L. Lions using probability spaces and the lift for functions on $\mathscr{P}_r$.
\begin{defn}
The solution of \eqref{1.1} is said to be $r$-th moment exponentially stable if there is a pair of constants $\lambda$ and $C$ such that
\begin{eqnarray}
\mathbb{E}|X_t|^r\leq C\mathbb{E}|X_s|^re^{-\lambda(t-s)},\;\;\;t\geq s.\label{7.1}
\end{eqnarray}
This means that the $r$-th moment will decrease at most exponentially with exponent $-\lambda$. Recall that $$\limsup\limits_{t\rightarrow\infty}\frac{1}{t}\log(\mathbb{E}|X_t|^r)$$ is the $r$-th moment Lyapunov exponent. Then \eqref{7.1} implies the $r$-th moment Lyapunov exponent should not be greater than $-\lambda$.
\end{defn}

\begin{defn}
(1) A function $V:\mathscr{P}_r(\mathbb{R}^d)\rightarrow \mathbb{R}$ is called $L$-differentiable at $\mu_0\in\mathscr{P}_r(\mathbb{R}^d)$, if the functional
\begin{align*}
L^r(\mathbb{R}^d\rightarrow\mathbb{R}^d,\mu_0)\ni\phi\longmapsto V(\mu_0\circ(\mbox{Id}+\phi)^{-1})
\end{align*}
is Fr\'{e}chet differentiable at $\textbf{0}\in L^r(\mathbb{R}^d\rightarrow\mathbb{R}^d,\mu_0)$, that is, there exists a unique $\xi\in (L^r)^{*}$ such that
\begin{eqnarray}
\lim\limits_{\mu_0(|\phi|^r)\rightarrow 0}\frac{V(\mu_0\circ(\mbox{Id}+\phi)^{-1})-V(\mu_0)-\mu_0(\langle \xi,\phi \rangle)}{\sqrt{\mu_0(|\phi|^r)}}=0.
\end{eqnarray}
In this case, we denote $\partial_\mu V(\mu_0)=\xi$ and call it the $L$-derivative of $V$ at $\mu_0$. If $V$ is $L$-differentiable at all $\mu\in \mathscr{P}_r$, we call it differentiable on $\mathscr{P}_r$.\\
(2) We say that $V\in\mathcal{C}^{(1,1)}(\mathscr{P}_r(\mathbb{R}^d);\mathbb{R})$ if $V$ is $L$-differentiable on $\mathscr{P}_r$ and 

(i) the mapping $\partial_{\mu}V: (\mu,y)\mapsto \partial_{\mu}V(\mu)(y)$ is jointly continuous in $(\mu,y)\in \mathscr{P}_r\times \mathbb{R}^d$;

(ii) for any $\mu \in \mathscr{P}_r(\mathbb{R}^d)$, the mapping $\mathbb{R}^d \ni y\mapsto \partial_{\mu}V(\mu)(y)$ is  continuously differentiable and its derivative $\partial_y\partial_{\mu}V$ is jointly continuous with respect to $(\mu,y)\in \mathscr{P}_r\times \mathbb{R}^d$.\\
(3) We say that $V\in \mathcal{C}^{1,2,(1,1)}([0,\infty) \times \mathbb{R}^d \times \mathscr{P}_r(\mathbb{R}^d);\mathbb{R})$, if $V$ satisfies:

(i) $V(\cdot,\cdot,\mu)\in \mathcal{C}^{1,2}([0,\infty) \times \mathbb{R}^d)$ for each $\mu$;

(ii) $V(t,x,\cdot)$ is in $\mathcal{C}^{(1,1)}(\mathscr{P}_r (\mathbb{R}^d))$ for each $(t,x)$.\\
Moreover all the partial derivatives must be jointly continuous in $(t,x,\mu)$ or $(t,x,\mu,y)$.\\
(4) Finally, the function $V$ is said to be in $\mathscr{C}^{(1,1)}(\mathscr{P}_r(\mathbb{R}^d))$, if \;$V\in \mathcal{C}^{(1,1)}( \mathscr{P}_r(\mathbb{R}^d);\mathbb{R})$ and for any compact set $\mathcal{K}\subset \mathscr{P}_r$,
\begin{eqnarray*}
\sup\limits_{\mu\in \mathcal{K}}\int_{\mathbb{R}^d}\{|\partial_\mu V(\mu)(y)|^r+\|\partial_y\partial_\mu V(\mu)(y)\|_{HS} ^{r}\}\mu(dy)<\infty.
\end{eqnarray*}
The function $V$ is said to be in $\mathscr{C}^{1,2,(1,1)}([0,\infty)\times\mathbb{R}^d\times\mathscr{P}_r)$, if \;$V\in \mathcal{C}^{1,2,(1,1)}([0,\infty)$ $\times \mathbb{R}^d \times \mathscr{P}_r)$ and for any compact set $\mathcal{K}\subset [0,\infty) \times \mathbb{R}^d \times \mathscr{P}_r$,
\begin{eqnarray*}
\sup\limits_{(t,x,\mu)\in \mathcal{K}}\int_{\mathbb{R}^d}\{|\partial_\mu V(t,x,\mu)(y)|^r+\|\partial_y\partial_\mu V(t,x,\mu)(y)\|_{HS} ^{r}\}\mu(dy)<\infty.
\end{eqnarray*}
\end{defn}
\begin{lem}
(It\^{o}'s formula for DDSDEs)
Assume (A1)-(A4), for any $V\in\mathscr{C}^{1,2,(1,1)}([0,\infty)\times\mathbb{R}^d\times\mathscr{P}_r)$, we denote $\mu_t=\mathscr{L}_{X_t}$ and have the following It\^{o}'s formula:
\begin{align*}
&\text{d} V(t,X_t,\mu_t)\\
=&\partial_t V(t,X_t,\mu_t)dt+\langle b, \partial_x V\rangle(t,X_t,\mu_t) \text{d} t+\frac{1}{2}\text{tr}[(\sigma\sigma^{*}\partial_{xx}^2 V)(t,X_t,\mu_t)]\text{d} t\\
&+\displaystyle\int_{\mathbb{R}^d}\langle b(t,y,\mu_t), \partial_\mu V(t,X_t,\mu_t)(y)\rangle \mu_t(\text{d} y)\text{d} t\\ &+\frac{1}{2}\displaystyle\int_{\mathbb{R}^d}\text{tr}[(\sigma\sigma^{*}(t,y,\mu_t)\partial_{y}\partial_{\mu}V(t,X_t,\mu_t)(y)]\mu_t(\text{d} y)\text{d} t\\
&+\langle\partial_x V(t,X_t,\mu_t),\sigma(t,X_t,\mu_t)\text{d}
W_t\rangle.
\end{align*}
\end{lem}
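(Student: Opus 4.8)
The plan is to prove It\^{o}'s formula for DDSDEs by reducing it to the standard (finite-dimensional) It\^{o} formula applied to a cleverly lifted functional, and then identifying the measure-dependent terms via the $L$-derivative. The key difficulty is that $\mu_t=\mathscr{L}_{X_t}$ is itself a time-dependent flow of measures, so the chain rule must account for the variation of $V$ in its measure argument as $\mu_t$ evolves. First I would fix $V\in\mathscr{C}^{1,2,(1,1)}([0,\infty)\times\mathbb{R}^d\times\mathscr{P}_r)$ and observe that under (A1)--(A4) the solution $X_t$ exists uniquely (by Theorem~2.5) with finite moments $\mathbb{E}\sup_{t\in[0,T]}|X_t|^p<\infty$ for $p\geq r$; this integrability is what justifies all the differentiations and interchanges of expectation and limit below.

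The core idea is to treat the $x$-argument and the $\mu$-argument separately and combine their increments. For the state variable, with $\mu$ frozen, I would apply the classical It\^{o} formula to $x\mapsto V(t,x,\mu)$ using the regularity $V(\cdot,\cdot,\mu)\in\mathcal{C}^{1,2}$, producing the drift terms $\partial_tV$, $\langle b,\partial_xV\rangle$, the trace term $\tfrac{1}{2}\mathrm{tr}[\sigma\sigma^{*}\partial_{xx}^2V]$, and the martingale term $\langle\partial_xV,\sigma\,\mathrm{d}W_t\rangle$. For the measure variable, I would handle the increment $V(t,x,\mu_{t+h})-V(t,x,\mu_t)$ by introducing an independent copy $(\tilde{X}_t,\tilde{W}_t)$ of the solution on a separate probability space, so that $\mu_t=\mathscr{L}_{\tilde{X}_t}$, and then write the measure derivative as an expectation against this copy:
\begin{align*}
\lim_{h\downarrow0}\frac{V(t,x,\mu_{t+h})-V(t,x,\mu_t)}{h}
&=\tilde{\mathbb{E}}\Big[\langle b(t,\tilde{X}_t,\mu_t),\partial_\mu V(t,x,\mu_t)(\tilde{X}_t)\rangle\\
&\quad+\tfrac{1}{2}\mathrm{tr}\big(\sigma\sigma^{*}(t,\tilde{X}_t,\mu_t)\,\partial_y\partial_\mu V(t,x,\mu_t)(\tilde{X}_t)\big)\Big].
\end{align*}
Rewriting $\tilde{\mathbb{E}}[\,\cdot\,]=\int_{\mathbb{R}^d}(\,\cdot\,)\,\mu_t(\mathrm{d}y)$ yields exactly the two integral terms in the claimed formula. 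The heuristic here is a second-order Taylor expansion of $V$ in the measure direction along the lifted map $\mathrm{Id}+\phi$, where the first-order term recovers $\partial_\mu V$ and the It\^{o} correction on the copy produces the $\partial_y\partial_\mu V$ trace term; the martingale part of the copy's dynamics vanishes in expectation.

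The main obstacle, and where I would spend the most care, is making the measure-increment expansion rigorous rather than formal. Concretely I would proceed by a time-discretization: partition $[s,t]$ and write the total increment as a telescoping sum, splitting each $V(t_{i+1},X_{t_{i+1}},\mu_{t_{i+1}})-V(t_i,X_{t_i},\mu_{t_i})$ into a pure state-increment piece (with $\mu$ frozen at $\mu_{t_i}$, handled by classical It\^{o}) and a pure measure-increment piece (with $x$ frozen at $X_{t_{i+1}}$). For the measure piece I would use the definition of the $L$-derivative together with a Taylor expansion, controlling the remainder via the moduli of continuity of $\partial_\mu V$ and $\partial_y\partial_\mu V$ guaranteed by $V\in\mathscr{C}^{1,2,(1,1)}$ and the uniform integrability bound $\sup_{(t,x,\mu)\in\mathcal{K}}\int\{|\partial_\mu V|^r+\|\partial_y\partial_\mu V\|_{HS}^r\}\,\mu(\mathrm{d}y)<\infty$ over compact sets. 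The key estimates are that $h\mapsto\mu_{t+h}$ moves at order $\sqrt{h}$ in $\mathbb{W}_r$ (from the $\sqrt{h}$-H\"{o}lder continuity of $X_t$ in $L^r$, established in the proof of Theorem~2.5) so that the second-order measure term survives in the limit while all higher-order remainders vanish. Passing to the limit as the mesh tends to zero, invoking the finite-moment bounds to justify dominated convergence, assembles the state and measure contributions into the stated formula; I expect the bookkeeping of the cross-terms and the uniform control of remainders to be the genuinely technical heart of the argument.
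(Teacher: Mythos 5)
Your proposal is sound in outline, but it takes a genuinely different route from the paper: you reconstruct It\^{o}'s formula for flows of measures from first principles, whereas the paper does not prove that formula at all --- it invokes \cite[Lemma 3.1]{RPPWFY} and \cite[Theorem 3.5]{CDD2014}, and its entire proof consists of verifying the single integrability hypothesis \eqref{1.5}, namely $\mathbb{E}\int_{0}^{T}\{|b(t,X_t,\mu_t)|^r+\|\sigma(t,X_t,\mu_t)\|_{HS}^r\}\,\text{d}t<\infty$, which it deduces from (A1) together with the moment bounds and the almost-sure approximation \eqref{0.11} obtained in the proof of Theorem 2.1. Your telescoping decomposition --- classical It\^{o} in $(t,x)$ with the measure frozen, plus a measure increment handled through an independent copy, a second-order expansion in the $L$-derivative, and the observation that the copy's martingale part vanishes in expectation while its quadratic part produces the $\partial_y\partial_\mu V$ trace --- is precisely the architecture of the proofs in those two references, so what you sketch is the proof of the cited theorem rather than an alternative to it. What your route buys is self-containedness and a transparent accounting of where each ingredient of the class $\mathscr{C}^{1,2,(1,1)}$ enters; what the paper's route buys is brevity, at the price of deferring the hard analysis (the uniform remainder control you rightly call the technical heart) to the literature. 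One concrete point you should add: the only step the paper actually carries out, condition \eqref{1.5}, remains implicit in your write-up. Moment bounds on $X_t$ alone do not yield integrability of $b(t,X_t,\mu_t)$ and $\sigma(t,X_t,\mu_t)$ along the solution; you need growth control on the coefficients, coming from (A1)--(A2) or from the approximation \eqref{0.11} as the paper uses, and this verification must precede your dominated-convergence and localization arguments.
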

\begin{proof} Let $T>0$ be fixed. According to \cite [Lemma 3.1]{RPPWFY} and \cite [Theorem 3.5]{CDD2014}, it suffices to prove
\begin{eqnarray}
\mathbb{E}\int_{0}^{T}\{|b(t,X_t,\mu_t)|^r+\|\sigma(t,X_t,\mu_t)\|_{HS}^r\}dt<\infty.\label{1.5}
\end{eqnarray}

By (A1) and \eqref{0.11}, it is obvious to see that $X_t$ satisfies \eqref{1.5}.
\end{proof}
We remark that in the case $V\in\mathscr{C}^{(1,1)}(\mathscr{P}_r)$, the above It\^{o} formula reduces to
\begin{align*}
&V(\mu_t)-V(\mu_0)\\
&=\displaystyle\int_{0}^{t}\mathbb{E}\left[\langle{b_s,\partial_\mu V(\mu_s)(X_s)\rangle} +\frac{1}{2}tr\{\sigma_s\sigma_s^*\partial_y\partial_\mu V(\mu_s)(X_s)\}\right]ds\\
&=\displaystyle\int_{0}^{t}\int_{\mathbb{R}^d}\{\sum\limits_{i=1}^{d}b_i(s,y,\mu_s)(\partial_\mu V)_i(\mu_s)(y)+\frac{1}{2}\sum\limits_{i,j,k=1}^{d}(\sigma_{ik}\sigma_{jk})(s,y,\mu_s)\partial_{y_j}(\partial_\mu V)_i(\mu_s)(y)\}\mu_s(dy)ds.
\end{align*}
Based on Lemma 3.1, we introduce a differential operator on $[0,\infty)\times \mathbb{R}^d \times \mathscr{P}_r(\mathbb{R}^d)$ associated with the DDSDE \eqref{1.1}. For any $V\in \mathscr{C}^{1,2,(1,1)}([0,\infty)\times \mathbb{R}^d \times \mathscr{P}_r(\mathbb{R}^d))$, we define $L^{\mu}$ as
\begin{align*}
&(L^{\mu}V)(t,x,\mu)\\
&=\left(\partial_t V+\langle{b,\partial_x V\rangle}(t,x,\mu)+\frac{1}{2}tr\left(\sigma\sigma^{*}(\partial_{xx}^2 V)\right)(t,x,\mu)\right)\\
&\;\;\;\;\;+\int_{\mathbb{R}^d}\left(\langle{b(t,y,\mu),\partial_{\mu}V(t,x,\mu)(y)\rangle}+\frac{1}{2}tr\{(\sigma\sigma^{*})(t,y,\mu)(\partial_y\partial_{\mu}V(t,x,\mu)(y))\}\right)\mu(dy). \end{align*}
\begin{thm}
Assume (A1)-(A4). If there exists a function $V\in \mathscr{C}^{1,2,(1,1)}([0$ $,\infty)\times \mathbb{R}^d \times \mathscr{P}_r)$, and positive constants $C_1,C_2,C_3,\gamma$ such that
\begin{eqnarray}
C_1|x|^r \leq V(t,x,\mu) \leq C_2|x|^r+C_2'\int_{\mathbb{R}^d}|x|^r\mu(\text{d} x),\label{3.2.1}\\
\int_{\mathbb{R}^d}L^{\mu}V(t,x,\mu)\mu (\text{d} x)\leq -\gamma \int_{\mathbb{R}^d}V(t,x,\mu)\mu (\text{d} x),\label{3.1}
\end{eqnarray}and 
\begin{eqnarray}
\sup\limits_{t\geq0}|(L^{\mu} V+\gamma V)(t,x,\mu)|\leq C_3 (|x|^r+\|\mu\|_r^r),\label{3.2}
\end{eqnarray}
then
\begin{eqnarray}
\mathbb{E}|X_t|^r\leq \frac{C_2+C_2'}{C_1}\mathbb{E}|X_0|^r e^{-\gamma t}.\label{2.3}
\end{eqnarray}
\end{thm}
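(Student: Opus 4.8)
The plan is to monitor the single scalar quantity
$$g(t):=\mathbb{E}\,V(t,X_t,\mu_t)=\int_{\mathbb{R}^d}V(t,x,\mu_t)\mu_t(\text{d}x),\qquad \mu_t:=\mathscr{L}_{X_t},$$
show that it decays at rate $\gamma$, and then transfer this decay to $\mathbb{E}|X_t|^r$ through the two-sided bound \eqref{3.2.1}. First I would apply the It\^o formula of Lemma 3.1 to $V(t,X_t,\mu_t)$, which gives the integral representation
$$V(t,X_t,\mu_t)=V(0,X_0,\mu_0)+\int_0^t(L^{\mu}V)(s,X_s,\mu_s)\text{d}s+M_t,$$
where $M_t=\int_0^t\langle\partial_xV(s,X_s,\mu_s),\sigma(s,X_s,\mu_s)\text{d}W_s\rangle$ is a local martingale.

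Before taking expectations the integrability must be secured, and this is exactly where \eqref{3.2} is used. Since $\|\mu_s\|_r^r=\mathbb{E}|X_s|^r$ and Theorem 2.1 guarantees $\mathbb{E}\sup_{t\in[0,T]}|X_t|^p<\infty$ for $p\geq r$, the bound $|L^{\mu}V+\gamma V|\leq C_3(|x|^r+\|\mu\|_r^r)$ together with the upper bound in \eqref{3.2.1} yields $\int_0^t\mathbb{E}|(L^{\mu}V)(s,X_s,\mu_s)|\text{d}s<\infty$. To dispose of the martingale term I would localize by $\tau_n=\inf\{t\geq0:|X_t|\geq n\}$, observe that $M_{t\wedge\tau_n}$ is a true martingale with vanishing expectation, and let $n\to\infty$, the passage to the limit being legitimized by the uniform integrability that the same moment bounds provide.

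Taking expectations then produces
$$g(t)=g(0)+\int_0^t\!\int_{\mathbb{R}^d}(L^{\mu}V)(s,x,\mu_s)\mu_s(\text{d}x)\,\text{d}s,$$
since $X_s$ has law $\mu_s$. Condition \eqref{3.1} bounds the inner integral by $-\gamma g(s)$, whence $g(t)\leq g(0)-\gamma\int_0^tg(s)\text{d}s$, and Gr\"onwall's inequality in integral form gives $g(t)\leq g(0)e^{-\gamma t}$. It then remains to sandwich with \eqref{3.2.1}: the lower bound gives $C_1\mathbb{E}|X_t|^r\leq g(t)$, while the upper bound, using $\int_{\mathbb{R}^d}|x|^r\mu_0(\text{d}x)=\mathbb{E}|X_0|^r$, gives $g(0)\leq(C_2+C_2')\mathbb{E}|X_0|^r$; combining these with $g(t)\leq g(0)e^{-\gamma t}$ is exactly \eqref{2.3}.

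I expect the delicate step to be not the algebra but the rigorous justification of the It\^o formula and the interchange of expectation with the time integral, namely the integrability of the drift and the vanishing of the martingale part under expectation. This is precisely why \eqref{3.2}, which plays no direct role in the final comparison inequality, is assumed: it supplies the polynomial control on $L^{\mu}V$ that, paired with the finite moments from Theorem 2.1, makes the localization argument and the limit $n\to\infty$ rigorous.
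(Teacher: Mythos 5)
Your proposal follows essentially the same strategy as the paper's proof: apply the It\^{o} formula of Lemma 3.1 to a functional built from $V$, localize with $\tau_n=\inf\{t\geq 0:|X_t|\geq n\}$, use \eqref{3.1} as the decay mechanism, use \eqref{3.2} together with the moment bounds from Theorem 2.1 to justify integrability and the limit $n\to\infty$, and finally sandwich with \eqref{3.2.1}. The one structural difference is that the paper applies It\^{o}'s formula directly to $e^{\gamma t}V(t,X_t,\mu_t)$, so the drift becomes $e^{\gamma s}(L^{\mu_s}V+\gamma V)$, whose expectation is nonpositive by \eqref{3.1}; the exponential factor is thus built in from the start, no Gr\"{o}nwall-type argument is needed, and the only remainder is the tail term over $[t\wedge\tau_n,t]$, which \eqref{3.2} controls.

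This difference matters because the one step of yours that needs repair is precisely the Gr\"{o}nwall step. From the inequality $g(t)\leq g(0)-\gamma\int_0^t g(s)\,\text{d}s$ alone, ``Gr\"{o}nwall's inequality in integral form'' does \emph{not} yield $g(t)\leq g(0)e^{-\gamma t}$: the integral form of Gr\"{o}nwall requires a nonnegative kernel, and with a negative coefficient the implication is false. Indeed, a continuous nonnegative $g$ that drops quickly from $g(0)$ to $0$, stays at $0$ for a long time, and then spikes back up to nearly $g(0)$ satisfies the integral inequality (the accumulated integral stays small) while grossly violating exponential decay. Your argument is salvageable, however, precisely because you established the \emph{identity}
\begin{equation*}
g(t)=g(0)+\int_0^t\!\int_{\mathbb{R}^d}(L^{\mu}V)(s,x,\mu_s)\,\mu_s(\text{d}x)\,\text{d}s
\end{equation*}
before invoking \eqref{3.1}: this shows $g$ is absolutely continuous with $g'(s)\leq-\gamma g(s)$ for a.e.\ $s$, hence $e^{\gamma t}g(t)$ is non-increasing, which gives $g(t)\leq g(0)e^{-\gamma t}$ and then \eqref{2.3}. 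That differential-inequality step is exactly what the paper's $e^{\gamma t}$-weighting accomplishes in one stroke; you should either state it that way or adopt the paper's weighting, rather than cite integral-form Gr\"{o}nwall.
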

\begin{proof} We may apply  It\^{o}'s formula to $e^{\gamma t}V(t,X_t,\mu_t)$, it follows that
\begin{align*}
&\text{d}\left(e^{\gamma t}V(t,X_t,\mu_t)\right)\\
&=\text{d}\left(e^{\gamma t}V(t,X_t,v)\right)\bigg |_{v=\mu_t}+\text{d}\left(e^{\gamma s}V(s,x,\mu_t)\right)\bigg |_{s=t,x=X_t}\\
&=e^{\gamma t}\{\gamma V(t,X_t,\mu_t)+\partial_tV(t,X_t,\mu_t)+\langle b,\partial_xV\rangle(t,X_t,\mu_t)+\frac{1}{2}tr{(\sigma\sigma^{*}\partial_{xx}^2 V)(t,X_t,\mu_t)}\}\text{d}t\\
&\;\;\;+e^{\gamma t}\int_{\mathbb{R}^d}\left[\langle b(t,y,\mu_t),\partial_{\mu}V(t,X_t,\mu_t)(y)\rangle+\frac{1}{2}tr\{\sigma\sigma^{*}(t,y,\mu_t)\partial_y\partial_\mu V(t,X_t,\mu_t)(y)\}\mu_t(dy)\right]\text{d}t\\
&\;\;\;+e^{\gamma t}\langle \partial_x V(t,X_t,\mu_t),\sigma(t,X_t,\mu_t)\text{d}W_t\rangle\\
&=e^{\gamma t}\left(L^{\mu_t}V+\gamma V\right)(t,X_t,\mu_t)\text{d}t+e^{\gamma t}\langle \partial_x V(t,X_t,\mu_t),\sigma(t,X_t,\mu_t)\text{d}W_t\rangle.
\end{align*}
For each positive integer $n$, $\tau_n$:= $ \inf\{t\geq 0:|X_t|\geq n\}$, obviously, $\tau_n\rightarrow \infty$ as $n\rightarrow \infty$ almost surely, then we can derive that
\begin{align*}
e^{\gamma(t\wedge\tau_n )}V(t\wedge\tau_n,X(t\wedge\tau_n),\mu_s)|{_{s=t\wedge\tau_n}}&=V(0,X_0,\mu_0)\\
&\;\;\;\;+\int_{0}^{t\wedge\tau_n }[e^{\gamma s}(L^{\mu_s}V+\gamma V)(s,X_s,\mu_s)]\text{d}s\\
&\;\;\;\;+\int_{0}^{t\wedge\tau_n }e^{\gamma s}\langle \partial_x V(s,X_s,\mu_s),\sigma(s,X_s,\mu_s)\text{d}W_s\rangle.
\end{align*}
Taking the expectation on both sides of the above equality and by \eqref{3.2.1}, \eqref{3.1}, \eqref{3.2}, we deduce that
\begin{align}
&\mathbb{E}\{e^{\gamma(t\wedge\tau_n)}V(t\wedge\tau_n,X(t\wedge\tau_n),\mu_s)|_{s=t\wedge\tau_n}\}\nonumber\\
&=\mathbb{E}V(0,X_0,\mu_0)+\int_{0}^{t}\mathbb{E}e^{\gamma s}(L^{\mu_s}V+\gamma V)(s,X_s,\mu_s)\text{d}s-\mathbb{E}\int_{t\wedge\tau_n}^{t}e^{\gamma s}(L^{\mu_s}V+\gamma V)(s,X_s,\mu_s)\text{d}s\nonumber\\
&\leq \mathbb{E}V(0,X_0,\mu_0)+\mathbb{E}\left[\mathbb{I}_{\{t>\tau_n\}}\int_{\tau_n}^{t}e^{\gamma s}(L^{\mu_s}V+\gamma V)(s,X_s,\mu_s)\text{d}s\right]\nonumber\\
&\leq \mathbb{E}V(0,X_0,\mu_0)+C_3\mathbb{E}\left[\mathbb{I}_{\{t>\tau_n\}}\int_{0}^{t}e^{\gamma s} (|X_s|^r+\mu_s(|\cdot|^r))\text{d}s\right],\label{3.3}
\end{align}
where $\mathbb{I}_{\{t>\tau_n\}}$ is the indicator function.
On the other hand, due to $e^{\gamma(t\wedge\tau_n)}>0$, it follows that
\begin{align}
\mathbb{E}\{e^{\gamma(t\wedge\tau_n)}V(t\wedge\tau_n,X(t\wedge\tau_n),\mu_s)|_{s=t\wedge\tau_n}\}&\geq \mathbb{E}\{e^{\gamma(t\wedge\tau_n)}(C_1|X(t\wedge\tau_n )|^r\}\label{2.5},
\end{align}
Which, together with \eqref{3.3}, implies the desired inequality \eqref{2.3}  by letting $n\rightarrow \infty$.
\end{proof}
\begin{rem}
If the Lyapunov function only dependent on time $t$ and measure $\mu$, then the condition \eqref{3.2.1} can be reduced by
\begin{eqnarray}
C_1\int_{\mathbb{R}^d}|x|^r\mu(\text{d}x) \leq \int_{\mathbb{R}^d}V(t,\mu)\mu(\text{d}x) \leq C_2\int_{\mathbb{R}^d}|x|^r\mu(\text{d} x).
\end{eqnarray}
\end{rem}
\begin{rem}
If the condition \eqref{3.2} is not satisfied, then we may derive the same estimates as before by lifting the integrality of $X_0$.
\end{rem}
\section{Invariant probability measures}
In the following part, we investigate a type of long-time behaviour for the solution of DDSDEs, that is, the existence and uniqueness of invariant probability measure $\mu\in \mathscr{P}_{r}$. For any $\mu\in \mathscr{P}_r$ and $s\geq 0$, let $(X_{s,t}^{\mu})$ solve \eqref{1.1} from $s$ with $\mathscr{L}_{X_{s,s}}=\mu$. When the DDSDE \eqref{1.1} has strong uniqueness (weak uniqueness is also workable), define
\begin{eqnarray*}
P_{s,t}^{*}\mu=\mathscr{L}_{X_{s,t}},\;\;s\leq t,\;\;\mu\in\mathscr{P}_r(\mathbb{R}^d).
\end{eqnarray*}

As shown in \cite{WFY2018} that $P_{s,t}^{*}$ is a nonlinear semigroup satisfying
\begin{eqnarray*}
P_{s,t}^{*}=P_{r,t}^{*}P_{s,r}^{*},\;\;0\leq s\leq r\leq t.
\end{eqnarray*}
When $\sigma_t$ and $b_t$ do not depend on time $t$, we have $P_{s,t}^{*}=P_{t-s}^{*}$ for $0\leq s\leq t$. We call $\mu \in \mathscr{P}_r$ an invariant probability measure of $P_t^{*}$ if $P_t^{*}\mu=\mu$ for all $t\geq0$. 
It is important to point out that the standard Krylov-Bogoliubov procedure on invariant measure cannot be applied to DDSDEs. It is because of the nonlinearity of semigroup $P_{s,t}^{*}$ that the classical tightness argument does not work.

In \cite{WFY2018}, sufficient conditions are given for $\mathbb{W}_2$-exponential contraction of $P_t^{*}$, the existence and uniqueness of invariant measure $\mu\in\mathscr{P}_2$. Now, we intend to weaken the condition \textbf{(H2$'$)} in \cite{WFY2018} into the following integrable strong monotonicity \textbf{(H)}:\\
\textbf{(H)} There exists a positive constant $C_1$ such that for any $\pi\in \mathscr{C}(\mu,\nu)$,
\begin{equation}
\begin{split}
&\int_{\mathbb{R}^d\times\mathbb{R}^d}\{\langle b(x,\mu)-b(y,\nu),x-y\rangle+\|\sigma(x,\mu)-\sigma(y,\nu)\|_{HS}^2\}\pi(\text{d}x,\text{d}y)\\
&\leq\;-\frac{C_1}{2}\int_{\mathbb{R}^d\times\mathbb{R}^d}|x-y|^2\pi(\text{d}x,\text{d}y),\;\;x,y\in \mathbb{R}^d;\mu,\nu \in \mathscr{P}_2.\label{6.3}
\end{split}
\end{equation}
\begin{thm}
Let $b_t=b$, $\sigma_t=\sigma$ do not depend on time $t$. Assume (A1)-(A4)  for $\theta=2$ and \textbf{(H)} hold, then $P_t^{*}$ has a unique invariant probability measure $\mu\in\mathscr{P}_2$.
\end{thm}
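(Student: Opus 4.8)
The plan is to follow the scheme of \cite{WFY2018}: first upgrade the integrated monotonicity \textbf{(H)} to an exponential contraction of $P_t^{*}$ in $\mathbb{W}_2$, and then run a Banach fixed-point argument at a single time, using the semigroup property to promote the fixed point to a genuine invariant measure and to get uniqueness.

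First I would fix $\mu,\nu\in\mathscr{P}_2$ and build a synchronous coupling of the two solutions: pick $X_0,Y_0$ on one probability space with $\mathscr{L}_{X_0}=\mu$, $\mathscr{L}_{Y_0}=\nu$ realising an optimal coupling, so that $\mathbb{E}|X_0-Y_0|^2=\mathbb{W}_2(\mu,\nu)^2$, and let $X_t,Y_t$ solve \eqref{1.1} driven by the \emph{same} $W_t$, with $\mathscr{L}_{X_t}=P_t^{*}\mu=:\mu_t$ and $\mathscr{L}_{Y_t}=P_t^{*}\nu=:\nu_t$. Applying It\^o's formula to $|X_t-Y_t|^2$, localising by $\tau_n=\inf\{t:|X_t|\vee|Y_t|\ge n\}$ and using the second moment bound $\mathbb{E}\sup_{t\le T}|X_t|^2<\infty$ from the well-posedness theorem (with $\theta=2$) to kill the martingale and pass to the limit, gives
$$\frac{\text{d}}{\text{d}t}\mathbb{E}|X_t-Y_t|^2=\mathbb{E}\big[2\langle b(X_t,\mu_t)-b(Y_t,\nu_t),X_t-Y_t\rangle+\|\sigma(X_t,\mu_t)-\sigma(Y_t,\nu_t)\|_{HS}^2\big].$$
Since $\mathscr{L}_{(X_t,Y_t)}\in\mathscr{C}(\mu_t,\nu_t)$ and $\|\sigma(X_t,\mu_t)-\sigma(Y_t,\nu_t)\|_{HS}^2\ge0$, I would bound the right-hand side above by $2$ times the integrand in \textbf{(H)}, so that \eqref{6.3} yields $\frac{\text{d}}{\text{d}t}\mathbb{E}|X_t-Y_t|^2\le -C_1\mathbb{E}|X_t-Y_t|^2$. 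Gr\"onwall's inequality and the definition of $\mathbb{W}_2$ then give the contraction
$$\mathbb{W}_2(P_t^{*}\mu,P_t^{*}\nu)^2\le\mathbb{E}|X_t-Y_t|^2\le e^{-C_1 t}\,\mathbb{W}_2(\mu,\nu)^2,\qquad t\ge0.$$

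With the contraction established I would fix any $t_0>0$; since $e^{-C_1 t_0/2}<1$ the map $P_{t_0}^{*}$ is a strict contraction of the complete metric space $(\mathscr{P}_2,\mathbb{W}_2)$ into itself (it maps $\mathscr{P}_2$ into $\mathscr{P}_2$ again by the moment estimate), so the Banach fixed-point theorem produces a unique $\bar\mu$ with $P_{t_0}^{*}\bar\mu=\bar\mu$. To promote $\bar\mu$ to an invariant measure for all times I use time-homogeneity and the semigroup property $P_{t_0}^{*}P_s^{*}=P_{t_0+s}^{*}=P_s^{*}P_{t_0}^{*}$: for every $s\ge0$ the measure $P_s^{*}\bar\mu$ is again a fixed point of $P_{t_0}^{*}$, whence $P_s^{*}\bar\mu=\bar\mu$ by uniqueness of the fixed point, i.e. $\bar\mu$ is invariant. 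Conversely any invariant $\mu$ satisfies $P_{t_0}^{*}\mu=\mu$ and thus coincides with $\bar\mu$, giving uniqueness in $\mathscr{P}_2$.

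The main obstacle is the contraction step, not the fixed-point bookkeeping. The two delicate points there are: (i) justifying the It\^o computation and the vanishing of the stochastic integral rigorously, which needs the localisation by $\tau_n$ together with the uniform second-moment control so that dominated convergence applies as $n\to\infty$; and (ii) the fact that \textbf{(H)} is only an \emph{integrated} (coupling-averaged) dissipativity, so it cannot be invoked pointwise. It is essential that the law of $(X_t,Y_t)$ is itself an admissible element of $\mathscr{C}(\mu_t,\nu_t)$, since this is exactly what lets \eqref{6.3} absorb the measure dependence $b(\cdot,\mu_t)$ versus $b(\cdot,\nu_t)$ that a pointwise monotonicity hypothesis could not control.
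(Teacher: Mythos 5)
Your proposal shares the paper's core mechanism but diverges genuinely in how the contraction is turned into existence and uniqueness, and it contains one concrete gap in the contraction step itself; let me address both.

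The contraction computation (synchronous coupling of an optimal initial coupling, It\^o on $|X_t-Y_t|^2$, the observation that $\mathscr{L}_{(X_t,Y_t)}\in\mathscr{C}(\mu_t,\nu_t)$ is exactly what lets the integrated condition \textbf{(H)} absorb the measure dependence, and the factor-two bookkeeping $2\langle\cdot,\cdot\rangle+\|\cdot\|^2\le 2[\langle\cdot,\cdot\rangle+\|\cdot\|^2]$) is the same as in step (1) of the paper's proof. What you do differently is the second half: you claim the contraction for \emph{arbitrary} $\mu,\nu\in\mathscr{P}_2$ and then run Banach's fixed-point theorem for $P_{t_0}^{*}$ on the complete space $(\mathscr{P}_2,\mathbb{W}_2)$, promoting the fixed point to a genuinely invariant measure through the commutation $P_s^{*}P_{t_0}^{*}=P_{t_0+s}^{*}=P_{t_0}^{*}P_s^{*}$. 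The paper instead proves the contraction only along the orbit of $\delta_0$ (between $P_t^{*}\delta_0$ and $P_{t+s}^{*}\delta_0$), derives the uniform bound $\sup_{t\ge0}\mathbb{E}|X_t(0)|^2<\infty$ from \textbf{(H)} with $y=0$, $\nu=\delta_0$, concludes that $\{P_t^{*}\delta_0\}_{t\ge0}$ is a $\mathbb{W}_2$-Cauchy family with limit $\mu$, and then establishes invariance of $\mu$ by a separate $\mathbb{W}_1$-contraction argument (It\^o applied to $(\epsilon+|X_t-Y_t|^2)^{1/2}$, then $\epsilon\to0$) and a triangle inequality. Your fixed-point bookkeeping is cleaner and makes uniqueness explicit, whereas the paper defers the reduction to Wang's scheme.

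The gap is precisely where your route demands more than the paper's. Your scheme needs the quadratic It\^o argument for initial laws that are merely in $\mathscr{P}_2$, and there your ``localise by $\tau_n$, kill the martingale, pass to the limit'' clashes with the very point (ii) you emphasize. After stopping, the drift contribution is $\mathbb{E}\int_0^{t\wedge\tau_n}D_s\,\text{d}s$ with $D_s=2\langle b(X_s,\mu_s)-b(Y_s,\nu_s),X_s-Y_s\rangle+\|\sigma(X_s,\mu_s)-\sigma(Y_s,\nu_s)\|_{HS}^2$; restricted to $\{s<\tau_n\}$ the law of $(X_s,Y_s)$ is only a sub-probability measure, \emph{not} a coupling of $(\mu_s,\nu_s)$, so \textbf{(H)} cannot be applied inside the localized integral. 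You must therefore remove the localization \emph{before} invoking \textbf{(H)}, and the dominated convergence plus Fubini needed for that requires $\mathbb{E}\int_0^t|D_s|\,\text{d}s<\infty$. At bare second moments this is not available from the hypotheses: (A2) controls only $\langle b(x,\mu),x\rangle$, the local Lipschitz constants $C_N$ in (A1) may grow arbitrarily fast in $N$, and the definition of a solution gives only $\int_0^t\mathbb{E}|b(X_s,\mu_s)|\,\text{d}s<\infty$, so the cross term $\mathbb{E}\left[|X_s-Y_s|\,|b(X_s,\mu_s)-b(Y_s,\nu_s)|\right]$ is uncontrolled. The paper is arranged exactly to dodge this: it takes $\mathscr{L}_{X_0}=\delta_0$ and $\mathscr{L}_{Y_0}=P_s^{*}\delta_0$, notes explicitly that both then have finite fourth moments so the quadratic It\^o formula holds with a true martingale and no localization, and touches the low-moment limit measure $\mu\in\mathscr{P}_2$ only through the globally Lipschitz test function $(\epsilon+|\cdot|^2)^{1/2}$, which needs no more integrability than the solution definition supplies. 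To repair your proof, either restrict the quadratic contraction to initial laws with finite fourth moments and rebuild existence along the paper's Cauchy-family lines (Banach is then unavailable, since $(\mathscr{P}_4,\mathbb{W}_2)$ is not complete), or keep the fixed-point scheme but obtain the $\mathscr{P}_2$-contraction via the paper's regularization device rather than by localizing the quadratic functional.
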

\begin{proof} Let $\delta_0$ be the Dirac measure at point $0\in\mathbb{R}^d$, then $P_t^{*}\delta_0=\mathscr{L}_{X_t(0)}$. 
As explained in \cite [Proof of Theorem 3.1 (2)] {WFY2018}, it suffices to prove
${P_t^{*}}$ is a $\mathbb{W}_2$-Cauchy family. To this end, we estimate $\mathbb{W}_2(P_t^{*}\delta_0,P_{t+s}^{*}\delta_0)$ and $\mathbb{E}|X_s(0)|^2$ for the purpose of proving $\mathbb{W}_2$-exponential contraction and the bounded property of $\mathbb{E}|X_s(0)|^2$ for any $s\geq0$.

(1) Let $X_t$ and $Y_t$ be two solutions of \eqref{1.1} such that $\mathscr{L}_{X_0}=\delta_0,\mathscr{L}_{Y_0}=P_s^{*}\delta_0 (s\geq0)$ and
$\mathbb{E}|X_0-Y_0|^2=\mathbb{W}_2(P_s^{*}\delta_0,\delta_0)^2$, these means $\mathbb{E}\sup\limits_{0\leq t\leq T}|X_t|^4<\infty$ and $\mathbb{E}\sup\limits_{0\leq t\leq T}|Y_t|^4<\infty$ for any $T\geq t\geq 0$.

By It\^{o}'s formula and taking expectation, we have
\begin{align*}
\mathbb{E}|X_t-Y_t|^2&=\mathbb{E}|X_0-Y_0|^2+2\int_{0}^{t}\mathbb{E}\langle X_s-Y_s,\{b(X_s,\mu_s)-b(Y_s,\nu_s)\}\text{d}s\rangle\nonumber\\
&\;\;\;\;+\int_{0}^{t}\mathbb{E}\|\sigma(X_s,\mu_s)-\sigma(Y_s,\nu_s)\|_{HS}^2\text{d}s,
\end{align*}
Taking \eqref{6.3} into account,
$$\int_{\mathbb{R}^d\times\mathbb{R}^d}|X_t(\omega_1)-Y_t(\omega_2)|^2\mathbb{P}_{X_t,Y_t}(\text{d}\omega_1,\text{d}\omega_2)\leq \mathbb{E}|X_0-Y_0|^2-C_1\int_{0}^{t}\mathbb{W}_2(\mu_s,\nu_s)^2\text{d}s.$$
where $\mathbb{P}_{X_t,Y_t}$ is the joint probability measure of $X_t$ and $Y_t$. Noting that $\mathbb{W}_2(\mu_s,\nu_s)^2\leq\mathbb{E}|X_s-Y_s|^2$, by Gronwall's lemma, this implies
\begin{eqnarray*}
\mathbb{E}|X_t-Y_t|^2\leq\mathbb{E}|X_0-Y_0|^2e^{-C_1t},\;\;\;\;t\geq 0,
\end{eqnarray*}
that is to say,
\begin{eqnarray}
\mathbb{W}_2(P_t^{*}\delta_0,P_{t+s}^{*}\delta_0)\leq \mathbb{W}_2(P_s^{*}\delta_0,\delta_0)e^{-C_1t},\;\;\;\;t\geq0.\label{3.6}
\end{eqnarray}

By \textbf{(H)} with $y=0, \nu=\delta_0$, taking $\pi=\mu\times\delta_0$, which means $\pi(\text{d}x,\text{d}y)=\mu(\text{d}x)\times\delta_0$. It is easy to see that
\begin{align}
&\int_{\mathbb{R}^d}\{2\langle b(x,\mu),x\rangle+\|\sigma(x,\mu)\|_{HS}^2\}\mu(\text{d}x)\nonumber\\
&\leq \int_{\mathbb{R}^d\times\mathbb{R}^d}\{2\langle b(x,\mu)-b(y,\delta_0),x\rangle+2\|\sigma(x,\mu)\|_{HS}^2\}\pi(\text{d}x,\text{d}y)\nonumber\\
&\;\;\;\;+\int_{\mathbb{R}^d}\{2\langle b(0,\delta_0),x\rangle+2\|\sigma(0,\delta_0)\|_{HS}^2\}\mu(\text{d}x)\nonumber\\
&\leq -(C_1-\epsilon)\mu(|\cdot|^2)+C_2\label{5.5}
\end{align}
holds for some constants $0<\epsilon<C_1$ and $C_2>0$. By It\^{o}'s formula, combing with \eqref{5.5}
, we derive
$$\mathbb{E}|X_t(0)|^2<\frac{C_2-C_2e^{-(C_1-\epsilon)t}}{C_1-\epsilon},\;\;t\geq0$$
and
\begin{eqnarray}
\sup\limits_{t\geq0}\mathbb{E}|X_t(0)|^2<\infty.\label{3.7}
\end{eqnarray}

Hence, \label{3.6} and \label{3.7} and semigroup property guarantee that
\begin{equation}
\begin{split}
\lim\limits_{t\rightarrow\infty}\sup\limits_{s\geq0}\mathbb{W}_2(P_t^{*}\delta_0,P_{t+s}^{*}\delta_0)^2&\leq \lim\limits_{t\rightarrow\infty}\sup\limits_{s\geq0} \mathbb{W}_2(P_s^{*}\delta_0,\delta_0)e^{-C_1t}\\
&=\lim\limits_{t\rightarrow\infty}\sup\limits_{s\geq0}\mathbb{E}|X_s(0)|^2e^{-C_1t}\\
&=0,\label{3.10}
\end{split}
\end{equation}
which implies that ${P_t^{*}}$ is a $\mathbb{W}_2$-Cauchy family and
\begin{eqnarray}
\lim\limits_{t\rightarrow\infty}\mathbb{W}_2(P_t^{*},\mu)=0\label{3.8}
\end{eqnarray}
holds for some $\mu\in\mathscr{P}_2$. We claim that $\mu$ is an invariant probability measure for ${P_t^{*}}$.

(2) Now, we intend to prove $\mathbb{W}_1$-exponential contraction of $P_t^{*}\mu$ and $P_t^{*}P_s^{*}\delta_0$. Let $X_t$ and $Y_t$ be two solutions of \eqref{1.1} such that $\mathscr{L}_{X_0}=\mu,\mathscr{L}_{Y_0}=P_s^{*}\delta_0 (s\geq0)$ and
$$\mathbb{E}|X_0-Y_0|=\mathbb{W}_1(P_s^{*}\delta_0,\mu).$$
Similarly, by It\^{o}'s formula,
\begin{align*}
\mathbb{E}(\epsilon+|X_t-Y_t|^2)^{\frac{1}{2}}&\leq e^{-C_1t}\mathbb{E}(\epsilon+|X_0-Y_0|^2)^{\frac{1}{2}}\nonumber\\
&\;\;\;+e^{-C_1t}\int_{0}^{t}e^{C_1s}\{C_1\mathbb{E}(\epsilon+|X_s-Y_s|^2)^{\frac{1}{2}}-{\frac{C_1}{2\sqrt{\epsilon}}}\mathbb{E}(\epsilon+|X_s-Y_s|^2)\}\text{d}s\nonumber\\
&\leq e^{-C_1t}\mathbb{E}(\epsilon+|X_0-Y_0|^2)^{\frac{1}{2}}+\frac{3C_1\sqrt{\epsilon}}{2}e^{-C_1t}\int_{0}^{t}e^{C_1s}\text{d}s\nonumber\\
&\leq e^{-C_1t}\mathbb{E}(\epsilon+|X_0-Y_0|^2)^{\frac{1}{2}}+\frac{3\sqrt{\epsilon}}{2}.
\end{align*}
Let $\epsilon\rightarrow0$, we derive
\begin{eqnarray}
\mathbb{W}_1(P_t^{*}\mu,P_t^{*}P_s^{*}\delta_0)\leq e^{-C_1t}\mathbb{W}_1(P_s^{*}\delta_0,\mu).\label{3.9}
\end{eqnarray}
Combing this with \eqref{3.10} and  \eqref{3.8}, we obtain
\begin{align*}
\mathbb{W}_1(P_s^{*}\mu,\mu)&\leq\mathbb{W}_1(P_s^{*}\mu,P_s^{*}P_t^{*}\delta_0)+\mathbb{W}_1(P_s^{*}P_t^{*}\delta_0,P_t^{*}\delta_0)+\mathbb{W}_1(P_t^{*}\delta_0,\mu)\\ &\leq\mathbb{W}_1(\mu,P_t^{*}\delta_0)e^{-C_1s}+\mathbb{W}_2(P_s^{*}P_t^{*}\delta_0,P_t^{*}\delta_0)+\mathbb{W}_2(P_t^{*}\delta_0,\mu),
\end{align*}
then $\mu$ is a invariant probability measure by letting $t\rightarrow\infty$.
\end{proof}
\begin{rem}
This theorem is a type of generalization of the main results in \cite{AX} and the theorem 3.1 in \cite{WFY2018}.
\end{rem}
We'd like to illustrate  by taking the following example
\begin{eqnarray}
\text{d}X_t=[-\alpha X_t-\mathbb{E}X_t]\text{d}t+\text{d}W_t,\;\;\;\;t\geq0,\label{4.5}
\end{eqnarray}
where $W$ is a standard one-dimensional Browian motion; $\alpha$ is a positive constant; $\mathbb{E}(X_t)$ is the mean of $X_t$. Since \eqref{4.5} is a gradient system, we may obtain invariant measures have explicit expression, i.e.
\begin{eqnarray}
\mathbb{P}_m(\text{d}x)=\frac{1}{Z}\exp\{-(\alpha x^2+2mx)\}\text{d}x,\label{4.6}
\end{eqnarray}
where $Z$ is the normalizing constant, and the constant must satisfy the self-consistence equation $m=\frac{m}{\alpha}$. Combing this with \eqref{4.6}, system \eqref{4.5} will have a unique invariant measure if $\alpha>0$.

However, according to in \cite[theorem 3] {AX}, we only can make the conclusion that system \eqref{4.5} will have a unique invariant measure if $\alpha$ satisfies $\alpha^2>6$. This indicates that the condition in theorem 3 is not a necessary condition for the existence of a invariant measure. Besides, by checking the condition \textbf{(H2$'$)} in \cite{WFY2018}, we can only get system \eqref{4.5} will have a unique invariant measure if $\alpha\geq1$ and the method is unworkable if $0<\alpha\leq1$. Fortunately, it is obvious that system \eqref{4.5} satisfy the condition \textbf{(H)} if $\alpha>0$, which implies \eqref{4.5} have a unique invariant measure $\mu\in\mathscr{P}_2$.

\section{Examples}
\subsection{DDSDE with local Lipschitsz condition}
We consider the following 1-dimension equations
\begin{eqnarray}
\text{d}X_t=[-(X_t)^3-X_t\int_{\mathbb{R}^d}((L\vee|x|)\wedge M)\mu_t(\text{d}x)]\text{d}t+\frac{1}{2}X_t \text{d}W_t,\label{5.1}
\end{eqnarray}
where $L$ and $M$ are positive constants.\\
(1) Since $b(x,\mu)=-x^3-x\int_{\mathbb{R}^d}((L\vee|x|)\wedge M)\mu(\text{d}x)$, $\sigma(x)=\frac{1}{2}x$ grow in polynomial with respect to $x$ and $b$ is linear with respect to measure, then the (A1)-(A4) hold true and the well-posedness follows from Theorem 2.1.\\
(2) To show the moment exponential stability, we may choose the Lyapunov function as
$$V(\mu)=\int_{\mathbb{R}}|x|^4\mu(\text{d}x),$$
by simple calculation, we have
\begin{align*}
L^{\mu}V(\mu)&=-4\int_{\mathbb{R}}y^6\mu(\text{d}y)-4\int_{\mathbb{R}}y^4\mu(\text{d}y)\int_{\mathbb{R}}(L\vee|x|)\wedge M)\mu(\text{d}x)+\frac{3}{2}\int_{\mathbb{R}}y^4\mu(\text{d}y)\\
&\leq -4(M-\frac{3}{8})\int_{\mathbb{R}}y^4\mu(\text{d}y)\\
&=-4(M-\frac{3}{8})V(\mu).
\end{align*}
Consequently, the solution of the equation \eqref{5.1} is 4-th moment exponentially stable for $M>\frac{3}{8}$ and the Lyapunov exponent is not greater than $-4(M-\frac{3}{8})$.\\
(3) As for the invariant probability measure, we may prove the existence and uniqueness. Indeed, we observe that for any $\pi\in \mathscr{C}(\mu,\nu)$,
\begin{align*}
&\int_{\mathbb{R}\times \mathbb{R}}\langle b(x,\mu)-b(y,\nu), x-y\rangle+\|\sigma(x)\|^2\pi(\text{d}x,\text{d}y)\\
&\leq -\left(\frac{3N-M}{2}-\frac{1}{4}\right)\int_{\mathbb{R}\times \mathbb{R}}|x-y|^2\pi(\text{d}x,\text{d}y),
\end{align*}
which implies the assertion by Theorem 4.1.

\subsection{Example due to Landau type equations}
Below, we consider the case with Maxwell molecules $\gamma=0$ for Landau type DDSDEs.\\
For two Lipschitz continuous maps:\\
$$b_0:\mathbb{R}^d\rightarrow \mathbb{R}^d,\sigma_0:\mathbb{R}^d \rightarrow \mathbb{R}^d\otimes\mathbb{R}^d.$$
Let\\
$$b^{\alpha}(x,\mu):=\int_{\mathbb{R}^d}b_0(x-\alpha z)\mu(\text{d}z),\sigma^{\alpha}(x,\mu):=\int_{\mathbb{R}^d}\sigma_0(x-\alpha z)\mu(\text{d}z)$$
\;\;\;\;\;\;\;\;\;\;\;\;\;$$\alpha \in \mathbb{R},x\in \mathbb{R}^d,\mu \in \mathscr{P}_{r}.$$\\
For fixed $\alpha \in \mathbb{R}$, consider the SDE\\
$$dX_t=b^{\alpha}(X_t,\mathscr{L}_{X_t})\text{d}t+{\sigma}^{\alpha}(X_t,\mathscr{L}_{X_t})\text{d}W_t.$$
In \cite{WFY2018}, Wang discussed the strong well-posedness  for this type of equation. Besides, gradient estimates and exponential ergodicity for this type of SDEs with jumps have been investigated in \cite{SYM2019}.
\subsubsection*{Example 2:}
\begin{eqnarray}
\text{d}X_t=-2\left(\int_{\mathbb{R}}(X_t+\alpha y)\mathscr{L}(X_t)(\text{d}y)\right)\text{d}t+\left(\int_{\mathbb{R}}(X_t+\alpha y)\mathscr{L}(X_t)(\text{d}y)\right)\text{d}W_t.\label{3.5}
\end{eqnarray}
First of all, we point that  the equation \eqref{3.5} has a unique solution $(X_t)_{t\geq 0}$ for any $\mathbb{E}|X_0|^4<\infty$, and $\mathbb{E}\sup\limits_{t\in [0,T]}|X_t|^{4}< \infty$ for all $T>0$. \\
(1) Choosing $V(x,\mu)=\left(\displaystyle\int_{\mathbb{R}}(x^2+\alpha y^2)\mu(\text{d}y)\right)^2$ for Lyapunov function. By the definition of $L$-derivative, we have
$$\partial_{x}V(x,\mu)=4x\left(\int_{\mathbb{R}}(x^2+\alpha y^2)\mu(\text{d}y)\right),\;\;\;\;\;\;\partial_{xx}^2V(x,\mu)=4 \left(\int_{\mathbb{R}}(x^2+\alpha y^2)\mu(\text{d}y)\right)+8x^2,$$
$$\partial_{\mu}V(x,\mu)(z)=4\alpha z\left(\int_{\mathbb{R}}(x^2+\alpha y^2)\mu(\text{d}y)\right),\;\;\;\partial_z \partial_{\mu}V(x,\mu)(z)=4\alpha \left(\int_{\mathbb{R}}(x^2+\alpha y^2)\mu(\text{d}y)\right).$$
By simple calculation, we deduce that
\begin{align*}
(L^{\mu}V)(x,\mu) &=-8x\left(\int_{\mathbb{R}}(x+\alpha y)\mu(\text{d}y)\right)\cdot \left(\int_{\mathbb{R}}(x^2+\alpha y^2)\mu(\text{d}y)\right)\\
&\;\;\;\;+\left(\int_{\mathbb{R}}(x+\alpha y)\mu(\text{d}y)\right)^2\left[2\left(\int_{\mathbb{R}}(x^2+\alpha y^2)\mu(\text{d}y)\right)+4x^2\right]\\
&\;\;\;\;-8\alpha\left(\int_{\mathbb{R}}(x^2+\alpha y^2)\mu(\text{d}y)\right)\cdot \left(\int_{\mathbb{R}}\left(z\int_{\mathbb{R}}(z+\alpha y)\mu(\text{d}y)\right)\mu(\text{d}z)\right)\\
&\;\;\;\;+2\alpha\left(\int_{\mathbb{R}}(x^2+\alpha y^2)\mu(\text{d}y)\right)\cdot \left(\int_{\mathbb{R}}\left(\int_{\mathbb{R}}(z+\alpha y)\mu(\text{d}y)\right)^2\mu(\text{d}z)\right)\\
&=:-8i_1+2i_2+4i_3-8\alpha i_4+2\alpha i_5.
\end{align*}
Let
$$a_1=\int_{\mathbb{R}}x^1\mu(\text{d}x), \ a_2=\int_{\mathbb{R}}x^2\mu(\text{d}x), \ a_3=\int_{\mathbb{R}}x^3\mu(\text{d}x), \ a_4=\int_{\mathbb{R}}x^4\mu(\text{d}x).$$
$$A_1=\left(\int_{\mathbb{R}}x\mu(\text{d}x)\right)^1, A_2=\left(\int_{\mathbb{R}}x\mu(\text{d}x)\right)^2, A_3=\left(\int_{\mathbb{R}}x\mu(\text{d}x)\right)^3, A_4=\left(\int_{\mathbb{R}}x\mu(\text{d}x)\right)^4.$$
Furthermore,\\
\begin{align*}
&\int_{\mathbb{R}}(L^{\mu}V)(x,\mu)\mu(\text{d}x)\\
&=-8\int_{\mathbb{R}}i_1\mu(\text{d}x)+2\int_{\mathbb{R}}i_2\mu(\text{d}x)+4\int_{\mathbb{R}}i_3\mu(dx)-8\alpha\int_{\mathbb{R}}i_4\mu(\text{d}x)+2\alpha\int_{\mathbb{R}}i_5\mu(\text{d}x)\\
&=:-8I_1+2I_2+4I_3-8\alpha I_4+2\alpha I_5\\
&=-2a_4-(8\alpha ^2+6\alpha)a_2{^2}+4\alpha a_1a_3+(2\alpha^3-2\alpha^2)a_1{^2}a_2.
\end{align*}
Here,
$$I_1=-8\left(a_4+\alpha a_2{^2}+\alpha a_1a_3+\alpha ^2 a_1{^2}a_2\right),
I_2=2\left(a_4+\alpha a_2+(\alpha^3+3\alpha^2)a_1{^2}a_2+2\alpha a_1a_3\right),$$
$$I_3=4\left(a_4+\alpha ^2 a_1{^2}a_2+2\alpha a_1a_3\right),
I_4=-8\alpha\left((1+\alpha) a_2{^2}+\alpha a_1{^2}a_2\right),
I_5=2\alpha\left(4 a_2{^2}+\alpha a_1{^2}a_2\right).$$
Above all, for any $\alpha \in (0,\frac{1}{2}),\exists \gamma >0$, such that $ 0< \gamma \leq (2-4\alpha)$,
$$\int_{\mathbb{R}}(L^{\mu}V)(x,\mu)\mu(\text{d}x) \leq -\gamma\int_{\mathbb{R}}V(x,\mu)\mu(\text{d}x). $$
Therefore, we obtain the assertion that the solution of equation is $4$-th moment exponentially stable and the $4$-th moment Lyapunov exponent should not be greater than $-2+4 \alpha$. \\
(2) If $\alpha\in(0,\frac{1}{2})$, for any $\pi\in \mathscr{C}(\mu,\nu)$, we derive
\begin{align*}
&\int_{\mathbb{R}^2}\{2\langle b(x,\mu)-b(y,\nu),x-y\rangle+\|\sigma(x,\mu)-\sigma(y,\nu)\|_{HS}^2\}\pi(\text{d}x,\text{d}y)\\
&\leq-3\int_{\mathbb{R}^2}|x-y|^2\pi(\text{d}x,\text{d}y)-(2\alpha-1)\left(\int_{\mathbb{R}^2}(x-y)\pi(\text{d}x,\text{d}y)\right)^2\\
&\leq-(2-2\alpha)\int_{\mathbb{R}^2}|x-y|^2\pi(\text{d}x,\text{d}y).
\end{align*}
By Theorem 4.1, it is obvious that DDSDE \eqref{3.5} satisfy \textbf{(H2$'$)} and constants $C_2>C_1\geq0$, then the semigroup $P_t^{*}$ associated with DDSDE \eqref{3.5} has a unique invariant probability measure.
\section*{Acknowledgements}
The author would like to thank my supervisor P.f. Feng-Yu Wang and P.f. Jianhai Bao for corrections and helpful comments.

\end{document}